\newcommand{\field}[1]{\ensuremath{\mathbb{#1}}}
\newcommand{\R}{\ensuremath{\field{R}}} 
\newcommand{\1}{\ensuremath{\mathbf{1}}} 
\newcommand{\I}[1]{\ensuremath{\mathbb{I}_{\left\{#1\right\}}}} 
\newcommand{\tends}{\ensuremath{\rightarrow}} 
\newcommand{\PR}{\ensuremath{\mathsf{P}}} 
\newcommand{\E}{\ensuremath{\mathsf{E}}} 
\newcommand{\defeq}{\ensuremath{\triangleq}}
\newcommand{\subjectto}{\text{\rm subject to}} 
\newcommand{\Ascr}{\ensuremath{\mathcal A}}
\newcommand{\Fscr}{\ensuremath{\mathcal F}}
\newcommand{\Gscr}{\ensuremath{\mathcal G}}
\newcommand{\Nscr}{\ensuremath{\mathcal N}}
\newcommand{\Sscr}{\ensuremath{\mathcal S}}
\newcommand{\Xscr}{\ensuremath{\mathcal X}}
\DeclareMathOperator*{\argmin}{\mathrm{argmin}}
\DeclareMathOperator*{\argmax}{\mathrm{argmax}}
\newcommand{\minimize}{\ensuremath{\mathop{\mathrm{minimize}}\limits}}
\newcommand{\maximize}{\ensuremath{\mathop{\mathrm{maximize}}\limits}}
\newtheoremstyle{thm-sf}{}{}{\itshape}{}{\sffamily\bfseries}{.}{ }{}
\theoremstyle{thm-sf}
\newtheorem{assumption}{Assumption}
\newtheorem{theorem}{Theorem}
\newtheorem{corollary}{Corollary}
\newtheorem{lemma}{Lemma}
\tikzstyle{every picture} += [>=stealth]
\def\@seccntformat#1{\csname the#1\endcsname.\quad}
\newcommand{\emailhref}[1]{\href{mailto:#1}{\tt #1}} 
\newcommand{\hidefastcompile}[1]{\ifthenelse{\boolean{fastcompile}}{}{#1}}
\newtheorem*{le:ell}{Lemma~\ref{le:ell}}
\newtheorem*{le:feas}{Lemma~\ref{le:feas}}
\newtheorem*{le:sample_complexity}{Lemma~\ref{le:sample_complexity}}
\title{\textsf{\textbf{Approximate Dynamic Programming \\
      via a Smoothed Linear Program}}}
\author{
  {\sffamily Vijay V. Desai} \\
  Industrial Engineering and Operations Research \\
  Columbia University \\
  email: \emailhref{vvd2101@columbia.edu} \\
  \and
  {\sffamily Vivek F. Farias} \\
  Sloan School of Management \\
  Massachusetts Institute of Technology \\
  email: \emailhref{vivekf@mit.edu}\\
  \and
  {\sffamily Ciamac C. Moallemi} \\
  Graduate School of Business \\
  Columbia University \\
  email: \emailhref{ciamac@gsb.columbia.edu}}
\date{Initial Version: August 4, 2009 \\
  Current Revision: September 25, 2009}
\begin{document}
\maketitle

\singlespacing

\begin{abstract}
  We present a novel linear program for the approximation of the dynamic
  programming cost-to-go function in high-dimensional stochastic control
  problems. LP approaches to approximate DP have typically relied on a natural
  `projection' of a well studied linear program for exact dynamic
  programming. Such programs restrict attention to approximations that are
  lower bounds to the optimal cost-to-go function. Our program---the `smoothed
  approximate linear program'---is distinct from such approaches and relaxes
  the restriction to lower bounding approximations in an appropriate fashion
  while remaining computationally tractable. Doing so appears to have several
  advantages: First, we demonstrate substantially superior bounds on the
  quality of approximation to the optimal cost-to-go function afforded by our
  approach. Second, experiments with our approach on a challenging problem
  (the game of Tetris) show that the approach outperforms the existing LP
  approach (which has previously been shown to be competitive with several ADP
  algorithms) by an order of magnitude.
\end{abstract}

\onehalfspacing

\section{Introduction}

Many dynamic optimization problems can be cast as Markov decision problems
(MDPs) and solved, in principle, via dynamic programming. Unfortunately, this
approach is frequently untenable due to the `curse of
dimensionality'. Approximate dynamic programming (ADP) is an approach which
attempts to address this difficulty.  ADP algorithms seek to compute good
approximations to the dynamic programing optimal cost-to-go function within
the span of some pre-specified set of basis functions.

ADP algorithms are typically motivated by exact algorithms for dynamic
programming. The approximate linear programming (ALP) method is one
such approach, motivated by the LP used for the computation of the optimal
cost-to-go function. Introduced by \citet{Schweitzer} and analyzed and further developed by
\citet{deFarias1,deFarias2}, this approach is attractive for a number of
reasons. First, the availability of efficient solvers for linear programming
makes the LP approach easy to implement. Second, the approach offers
attractive theoretical guarantees. In particular, the quality of the
approximation to the cost-to-go function produced by the LP approach can be
shown to compete, in an appropriate sense, with the quality of the best
possible approximation afforded by the set of basis functions used. A
testament to the success of the LP approach is the number of applications it
has seen in recent years in large scale dynamic optimization problems. These
applications range from the control of queueing networks to revenue management
to the solution of large scale stochastic games.

The optimization program employed in the ALP approach is in some sense the most natural linear programming formulation for ADP. In particular, the ALP is identical to the
linear program used for exact computation of the optimal cost-to-go function,
with further constraints limiting solutions to the low-dimensional subspace
spanned by the basis functions used. The resulting LP implicitly restricts
attention to approximations that are lower bounds to the optimal cost-to-go
function. The structure of this program appears crucial in establishing
guarantees on the quality of approximations produced by the approach; these approximation guarantees were remarkable and a first for any ADP method. 
That said, the restriction to lower bounds naturally leads one to ask whether the program
employed by the ALP approach is the `right' math programming formulation for ADP. In particular, it may be
advantageous to relax the lower bound requirement so as to allow for a better
approximation, and, ultimately, better policy performance. Is there an alternative formulation that permits better
approximations to the cost-to-go function while remaining computationally
tractable? Motivated by this question, the present paper introduces a new linear
program for ADP we call the `smoothed' approximate linear program (or
SALP). We believe that the SALP provides a preferable math programming formulation for ADP. In particular, we make the following
contributions:

\begin{enumerate}
\item  We are able to establish strong approximation and
  performance guarantees for approximations to the cost-to-go function produced by the SALP; these guarantees are
  \emph{substantially} stronger than the corresponding guarantees for the ALP.

\item The number of constraints and variables in the SALP scale with the size
  of the MDP state space. We nonetheless establish sample complexity bounds
  that demonstrate that an appropriate `sampled' SALP provides a good
  approximation to the SALP solution with a tractable number of sampled MDP
  states. Moreover, we identify structural properties for the sampled SALP
  that can be exploited for fast optimization. Our sample complexity results
  and these structural observations allow us to conclude that the SALP is
  essentially no harder to solve than existing LP formulations for ADP.

\item We present a computational study demonstrating the efficacy of our
  approach on the game of Tetris. Tetris is a notoriously difficult,
  `unstructured' dynamic optimization problem and has been used as a
  convenient testbed problem for numerous ADP approaches. The ALP has been
  demonstrated to be competitive with other ADP approaches for Tetris, such as
  temporal difference learning or policy gradient methods
  \citep[see][]{FariasVanRoyTetris}.  In detailed comparisons with the ALP, we
  show that the SALP provides an \emph{order of magnitude} improvement
  over controllers designed via that approach for the game of Tetris.
\end{enumerate}


The literature on ADP algorithms is vast and we make no attempt to survey it
here. \citet{BVRNDPReview} or \citet[][Chap.~6]{Bertsekas95} provide good,
brief overviews, while \citet{BertsekasNDP} and \citet{Powell2007} are
encyclopedic references on the topic.  The exact LP for the solution of
dynamic programs is attributed to \citet{Manne}. The ALP approach to ADP was
introduced by \citet{Schweitzer} and
\citet{deFarias1,deFarias2}. \citet{deFarias1} establish strong approximation
guarantees for ALP based approximations assuming knowledge of a
`Lyapunov'-like function which must be included in the basis. The approach we
present may be viewed as optimizing over all possible Lyapunov functions.
\citet{deFarias06} introduce a program for average cost approximate dynamic
programming that resembles the SALP; a critical difference is that their
program requires the relative violation allowed across ALP constraints be
specified as input. Applications of the LP approach to ADP range from
scheduling in queueing networks \citep{Morrison99,Veatch05,Moallemi08},
revenue management \citep{Adelman05,FariasVanRoyNetRM, Zhang09}, portfolio
management \citep{HanThesis}, inventory problems \citep{Adelman04,Adelman09},
and algorithms for solving stochastic games \citep{Weintraub08} among others.
Remarkably, in applications such as network revenue management, control
policies produced via the LP approach
\citep[namely,][]{Adelman05,FariasVanRoyNetRM} are competitive with ADP
approaches that carefully exploit problem structure, such as for instance
that of \citet{Topaloglu07}.



The remainder of this paper is organized as follows: In
Section~\ref{sec:prob-form}, we formulate the approximate dynamic programming
setting and describe the ALP approach. The smoothed ALP is developed as a
relaxation of the ALP in Section~\ref{sec:salp}. Section~\ref{sec:analysis}
provides a theoretical analysis of the SALP, in terms of approximation and
performance guarantees, as well as a sample complexity bound. In
Section~\ref{sec:prac-imp}, we describe the practical implementation of the
SALP method, illustrating how parameter choices can be made as well as how to
efficiently solve the resulting optimization program. Section~\ref{sec:tetris}
contains the computational study of the game Tetris. Finally, in
Section~\ref{sec:conclusion}, we conclude.

\section{Problem Formulation}\label{sec:prob-form}

Our setting is that of a discrete-time, discounted infinite-horizon,
cost-minimizing MDP with a finite state space $\Xscr$ and finite action space
$\Ascr$. At time $t$, given the current state $x_t$ and a choice of action
$a_t$, a per-stage cost $g(x_t,a_t)$ is incurred. The subsequent state
$x_{t+1}$ is determined according to the transition probability kernel
$P_{a_t}(x_t,\cdot)$. 

A stationary policy $\mu\colon \Xscr\rightarrow\Ascr$ is a mapping that determines
the choice of action at each time as a function of the state. Given each
initial state $x_0=x$, the expected discounted cost (cost-to-go function) of
the policy $\mu$ is given by
\[
J_\mu(x) \defeq
\E_\mu\left[\left.
\sum_{t=0}^\infty \alpha^t g(x_t,\mu(x_t))\ \right|\ x_0=x\right].
\]
Here, $\alpha\in(0,1)$ is the discount factor. The expectation is taken under
the assumption that actions are selected according to the policy $\mu$. In
other words, at each time $t$, $a_t \defeq \mu(x_t)$.

Denote by $P_\mu\in\R^{\Xscr\times\Xscr}$ the transition probability matrix
for the policy $\mu$, whose $(x,x')$th entry is $P_{\mu(x)}(x,x')$. Denote by
$g_\mu\in\R^\Xscr$ the vector whose $x$th entry is $g(x,\mu(x))$. Then, the
cost-to-go function $J_\mu$ can be written in vector form as
\[
J_\mu = \sum_{t=0}^\infty \alpha^t P_\mu^t g_\mu.
\]
Further, the cost-to-go function $J_\mu$ is the unique solution to the
equation $T_\mu J = J$, where the operator $T_\mu$ is defined by $T_\mu J =
g_\mu + \alpha P_\mu J$.

Our goal is to find an optimal stationary policy $\mu^*$, that is, a policy
that minimizes the expected discounted cost from every state $x$. In particular,
\[
\mu^* \in \argmin_\mu J_\mu(x).
\]
The Bellman operator $T$ is defined component-wise according to 
\[
(TJ)(x) \defeq \min_{a\in\Ascr}\ g(x,a) + \alpha \sum_{x'\in\Xscr} P_a(x,x') J(x'),
\quad\forall\ x\in\Xscr.
\]
Bellman's equation is then the fixed point equation
\begin{equation}\label{eq:bellman}
TJ = J.
\end{equation}
Standard results in dynamic programming establish that the optimal cost-to-go
function $J^*$ is the unique solution to Bellman's equation \citep[see, for
example,][Chap.~1]{Bertsekas95}. Further, if $\mu^*$ is a policy that is
greedy with respect to $J^*$ (i.e., $\mu^*$ satisfies $T J^*=T_{\mu^*} J^*$),
then $\mu^*$ is an optimal policy.

\subsection{The Linear Programming Approach}\label{sec:exactlp}

A number of computational approaches are available for the solution of the
Bellman equation. One approach involves solving the optimization program:
\begin{equation}\label{eq:exactlp}
\begin{array}{lll}
\maximize_J & \nu^\top J \\
\subjectto & J \leq TJ.
\end{array}
\end{equation}
Here, $\nu\in\R^\Xscr$ is a vector with positive components that are known as
the {\em state-relevance weights}. The above program is indeed an LP since for each state $x$, the constraint $J(x) \leq (TJ)(x)$ is
equivalent to the set of $|\Ascr|$ linear constraints
\[
J(x) \leq g(x,a) + \alpha \sum_{x'\in\Xscr} P_a(x,x') J(x'),\quad\forall\
a\in\Ascr.
\]
We refer to \eqref{eq:exactlp} as the {\em exact LP}.

Suppose that a vector $J$ is feasible for the exact LP
\eqref{eq:exactlp}. Since $J \leq TJ$, monotonicity of the Bellman
operator implies that $J \leq T^k J$, for any integer $k \geq 1$. Since the
Bellman operator $T$ is a contraction, $T^k J$ must converge to the unique
fixed point $J^*$ as $k\tends\infty$. Thus, we have that $J \leq J^*$. Then, it
is clear that every feasible point for \eqref{eq:exactlp} is a component-wise
lower bound to $J^*$. Since $J^*$ itself is feasible for \eqref{eq:exactlp},
it must be that $J^*$ is the unique optimal solution to the exact LP.

\subsection{The Approximate Linear Program}\label{sec:alp}

In many problems, the size of the state space is enormous due to the curse of
dimensionality. In such cases, it may be prohibitive to store, much less
compute, the optimal cost-to-go function $J^*$. In approximate dynamic
programming (ADP), the goal is to find tractable approximations to the optimal
cost-to-go function $J^*$, with the hope that they will lead to good policies.

Specifically, consider a collection of {\em basis functions} $\{
\phi_1,\ldots,\phi_K\}$ where each $\phi_i\colon \Xscr\rightarrow \R$ is a
real-valued function on the state space. ADP algorithms seek to find linear
combinations of the basis functions that provide good approximations to the
optimal cost-to-go function. In particular, we seek a vector of weights
$r\in\R^K$ so that
\[
J^*(x) \approx J_r(x) \defeq \sum_{r=1}^K \phi_i(x) r_i = \Phi r(x).
\]
Here, we define $\Phi \defeq [\phi_1 \ \phi_2 \ \dots \ \phi_K]$ to be a matrix with
columns consisting of the basis functions. Given a vector of weights $r$ and
the corresponding value function approximation $\Phi r$, a policy $\mu_r$ is
naturally defined as the `greedy' policy with respect to $\Phi r$, i.e. as $T_{\mu_r} \Phi r = T \Phi r$.

One way to obtain a set of weights is to solve the exact LP
\eqref{eq:exactlp}, but restricting to the low-dimensional subspace of vectors
spanned by the basis functions. This leads to the {\em approximate linear
  program} (ALP), which is defined by
\begin{equation}\label{eq:alp}
\begin{array}{lll}
\maximize_r & \nu^\top \Phi r \\
\subjectto & \Phi r \leq T\Phi r.
\end{array}
\end{equation}

For the balance of the paper, we will make the following assumption:
\begin{assumption}
  Assume the $\nu$ is a probability distribution ($\nu \geq 0$, $\1^\top \nu =
  1$), and that the constant function $\1$ is in the span of the basis
  functions $\Phi$.
\end{assumption}

The geometric intuition behind the ALP is illustrated in
Figure~\ref{fig:alp}. Supposed that $r_{\text{ALP}}$ is a vector that is
optimal for the ALP. Then the approximate value function $\Phi r_{\text{ALP}}$
will lie on the subspace spanned by the columns of $\Phi$, as illustrated by
the orange line. $\Phi r_{\text{ALP}}$ will also satisfy the constraints of
the exact LP, illustrated by the dark gray region. By the discussion in Section~\ref{sec:exactlp}, this implies that $\Phi
r_{\text{ALP}} \leq J^*$. In other words, the approximate cost-to-go function
is necessarily a point-wise lower bound to the true cost-to-go function in the
span of $\Phi$.

One can thus interpret the ALP solution $r_{\text{ALP}}$ equivalently as the
optimal solution to the program
\begin{equation}\label{eq:state_rel_wts}
    \begin{array}{lll}
    \minimize_r & \| J^* - \Phi r \|_{1,\nu} \\
    \subjectto & \Phi r \leq T\Phi r.
    \end{array}
\end{equation}
Here, the weighted $1$-norm in the objective is defined by
\[
\| J^* - \Phi r \|_{1,\nu} \defeq \sum_{x\in\Xscr} \nu(x) | J^*(x) - \Phi r (x)|.
\]
This implies that the approximate LP will find the closest approximation (in
the appropriate norm) to the optimal cost-to-go function, out of all
approximations satisfying the constraints of the exact LP.

\begin{figure}[htb]
  \centering
  \subfigure[ALP case.\label{fig:alp}]{
  \hidefastcompile{
    \centering
  \begin{tikzpicture}[x=2.25in,y=2in]
    \node[coordinate] at (0,0) (origin) {};
    \node[coordinate] at (1,0) (x1) {};
    \node[coordinate] at ([xshift=-15pt] origin) (x1m) {};
    \node[coordinate] at (x1m -| x1) (x1e) {};
    \node[coordinate] at (0,1) (x2) {};
    \node[coordinate] at ([yshift=-15pt] origin) (x2m) {};
    \node[coordinate] at (x2m |- x2) (x2e) {};
    \node[coordinate] at (x1m |- x2m) (origin_m) {};

    \node[coordinate] at (0.5,0.75) (Jstar) {};
    \node[coordinate] at (x1m |- 0,0.05) (bound_l) {};
    \node[coordinate] at (x2m -| 0.45,0) (bound_d) {};

    \node[coordinate] at ([xshift=25pt,yshift=25pt] Jstar) (Jstar2) {};
    \node[coordinate] at (x1m |- 0,0.9) (bound_l2) {};
    \node[coordinate] at ([xshift=25pt] bound_d) (bound_d2) {};

    \fill[gray!50] (bound_l) -- (Jstar) -- (bound_d)
    -- (origin_m) -- cycle;
    \draw[thick] (bound_l) -- (Jstar) -- (bound_d);

    \node[coordinate] at (x2 -| 0.4,1) (phir) {};
    \node[above] at (phir) { $J = \Phi r$};
    \node[coordinate] at (intersection of origin_m--x2m and phir--origin)
    (phir_b) {};
    \draw[thick,orange] (phir_b) -- (phir);
    \node[coordinate] at (intersection of phir_b--phir and bound_l--Jstar)
    (phir_alp) {};
    \draw[fill=red] (phir_alp) circle (2pt);
    \node[below right] at (phir_alp) {$\Phi r_{\text{ALP}}$};
    \node[coordinate] at (intersection of phir_b--phir and bound_l2--Jstar2)
    (phir_salp) {};

    \draw[fill=green] (Jstar) circle (2pt);
    \node[right] at (Jstar) {$J^*$};

    \draw[<->,red,very thick,shorten >=3pt,shorten <=3pt] (Jstar) to (phir_alp);

    \node[coordinate] at (0.3,0.1) (c) {};
    \draw[->,thick] (origin) to (c);
    \node[right] at (c) {$\nu$};

    \draw[->,thin] ([xshift=-5pt] origin) to (x1);
    \node[right] at (x1) {\footnotesize $J(1)$};
    \draw[->,thin] ([yshift=-5pt] origin) to (x2);
    \node[above] at (x2) {\footnotesize $J(2)$};
  \end{tikzpicture}
  }
  }
  \subfigure[SALP case.\label{fig:salp}]{
  \hidefastcompile{
    \centering
  \begin{tikzpicture}[x=2.25in,y=2in]
    \node[coordinate] at (0,0) (origin) {};
    \node[coordinate] at (1,0) (x1) {};
    \node[coordinate] at ([xshift=-15pt] origin) (x1m) {};
    \node[coordinate] at (x1m -| x1) (x1e) {};
    \node[coordinate] at (0,1) (x2) {};
    \node[coordinate] at ([yshift=-15pt] origin) (x2m) {};
    \node[coordinate] at (x2m |- x2) (x2e) {};
    \node[coordinate] at (x1m |- x2m) (origin_m) {};

    \node[coordinate] at (0.5,0.75) (Jstar) {};
    \node[coordinate] at (x1m |- 0,0.05) (bound_l) {};
    \node[coordinate] at (x2m -| 0.45,0) (bound_d) {};

    \node[coordinate] at ([xshift=25pt,yshift=25pt] Jstar) (Jstar2) {};
    \node[coordinate] at (x1m |- 0,0.9) (bound_l2) {};
    \node[coordinate] at ([xshift=25pt] bound_d) (bound_d2) {};

    \fill[gray!20] (bound_l2) -- (Jstar2) -- (bound_d2)
    -- (origin_m) -- cycle;
    \draw[thick] (bound_l2) -- (Jstar2) -- (bound_d2);

    \node[coordinate] at (x2 -| 0.4,1) (phir) {};
    \node[above] at (phir) { $J = \Phi r$};
    \node[coordinate] at (intersection of origin_m--x2m and phir--origin)
    (phir_b) {};
    \draw[thick,orange] (phir_b) -- (phir);
    \node[coordinate] at (intersection of phir_b--phir and bound_l2--Jstar2)
    (phir_salp) {};
    \draw[fill=blue] (phir_salp) circle (2pt);
    \node[below left] at (phir_salp) {$\Phi r_{\text{SALP}}$};

    \draw[fill=green] (Jstar) circle (2pt);
    \node[right] at (Jstar) {$J^*$};

    \draw[<->,red,very thick,shorten >=3pt,shorten <=3pt] (Jstar) to (phir_salp);

    \node[coordinate] at (0.3,0.1) (c) {};
    \draw[->,thick] (origin) to (c);
    \node[right] at (c) {$\nu$};

    \draw[->,thin] ([xshift=-5pt] origin) to (x1);
    \node[right] at (x1) {\footnotesize $J(1)$};
    \draw[->,thin] ([yshift=-5pt] origin) to (x2);
    \node[above] at (x2) {\footnotesize $J(2)$};
  \end{tikzpicture}
  }
  }
  \caption{A cartoon illustrating the feasible set and optimal solution for
    the ALP and SALP, in the case of a two-state MDP. The axes correspond to
    the components of the value function. A careful relaxation from the
    feasible set of the ALP to that of the SALP can yield an improved
    approximation. It is easy to construct a concrete two state example with the above features.} \label{fig:alp-salp}
\end{figure}
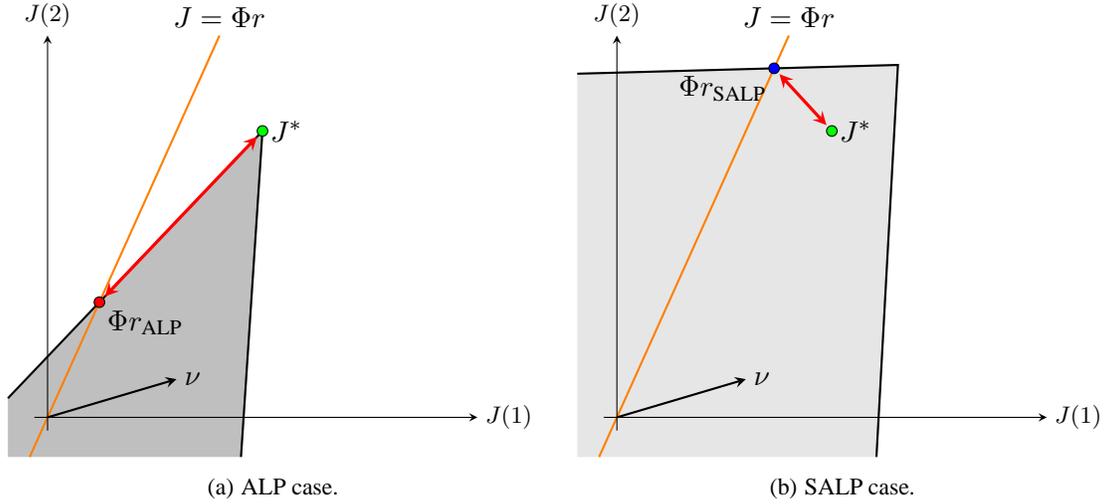

\section{The Smoothed ALP}\label{sec:salp}

The $J \leq TJ$ constraints in the exact LP, which carry over to the ALP,
impose a strong restriction on the cost-to-go function approximation: in
particular they restrict us to approximations that are lower bounds to $J^*$
at {\em every point in the state space}. In the case where the state space is
very large, and the number of basis functions is (relatively) small, it may be
the case that constraints arising from rarely visited or pathological states
are binding and influence the optimal solution.

In many cases, the ultimate goal is not to find a \emph{lower bound} on the
optimal cost-to-go function, but rather to find a \emph{good
  approximation}. In these instances, it may be that relaxing the
constraints in the ALP, so as not to require a uniform lower bound, may allow
for better overall approximations to the optimal cost-to-go function. This is
also illustrated in Figure~\ref{fig:alp-salp}. Relaxing the feasible region of
the ALP in Figure~\ref{fig:alp} to the light gray region in
Figure~\ref{fig:salp} would yield the point $\Phi r_{\text{SALP}}$ as an
optimal solution. The relaxation in this case is clearly beneficial; it allows
us to compute a better approximation to $J^*$ than the point $\Phi
r_{\text{SALP}}$.  

Can we construct a fruitful relaxation of this sort in
general?  The {\em smoothed approximate linear program} (SALP) is given by:
\begin{equation}\label{eq:salp}
\begin{array}{lll}
\maximize_{r, s} & \nu^\top \Phi r \\
\subjectto & \Phi r \leq T\Phi r + s, \\
& \pi^\top s \leq \theta,\quad s \geq 0.
\end{array}
\end{equation}
Here, a vector $s\in\R^\Xscr$ of additional decision variables has been
introduced. For each state $x$, $s(x)$ is a non-negative decision variable (a
slack) that allows for violation of the corresponding ALP constraint. The
parameter $\theta \geq 0$ is a non-negative scalar. The parameter
$\pi\in\R^\Xscr$ is a probability distribution known as the {\em constraint
  violation distribution}. The parameter $\theta$ is thus a {\em violation
  budget}: the expected violation of the $\Phi r \leq T \Phi r$ constraint,
under the distribution $\pi$, must be less than $\theta$.

The SALP can be alternatively written as
\begin{equation}\label{eq:salp2}
\begin{array}{lll}
\maximize_{r} & \nu^\top \Phi r \\
\subjectto & \pi^\top (\Phi r - T\Phi r)^+ \leq \theta.
\end{array}
\end{equation}
Here, given a vector $J$, $J^+(x) \defeq \max(J(x), 0)$ is defined
to be the component-wise positive part. Note that, when $\theta=0$, the SALP
is equivalent to the ALP. When $\theta > 0$, the SALP replaces the `hard'
constraints of the ALP with `soft' constraints in the form of a hinge-loss
function.

The balance of the
paper is concerned with establishing that the SALP forms the basis of a useful
approximate dynamic programming algorithm in large scale problems:
\begin{itemize}
\item We identify a concrete choice of violation budget $\theta$ and an
  idealized constraint violation distribution $\pi$ for which the SALP
  provides a useful relaxation in that the optimal solution can be a better
  approximation to the optimal cost-to-go function. This brings the cartoon
  improvement in Figure~\ref{fig:alp-salp} to fruition for general problems.
\item We show that the SALP is tractable (i.e., it is well approximated by an
  appropriate `sampled' version) and present computational experiments for a
  hard problem (Tetris) illustrating an order of magnitude improvement over
  the ALP.
\end{itemize}

\section{Analysis}\label{sec:analysis}

This section is dedicated to a theoretical analysis of the SALP. The
overarching objective of this analysis is to provide some assurance of the
soundness of the proposed approach. In some instances, the bounds we provide
will be directly comparable to bounds that have been developed for the ALP
method. As such, a relative consideration of the bounds in these two cases can
provide a theoretical comparison between the ALP and SALP methods. In
addition, our analysis will serve as a crucial guide to practical
implementation of the SALP as will be described in Section~\ref{sec:prac-imp}.
In particular, the theoretical analysis presented here provides intuition as
to how to select parameters such as the state-relevance weights and the
constraint violation distribution. We note that all of our bounds are relative to a measure of how well the approximation architecture employed is capable of approximating the optimal cost-to-go function; it is unreasonable to expect non-trivial bounds that are independent of the architecture used.


Our analysis will present three types of results:
\begin{itemize}
\item Approximation guarantees (Sections~\ref{sec:simple_bound} and
  \ref{sec:approx_gur}): We establish bounds on the distance between
  approximations computed by the SALP and the optimal value function $J^*$,
  relative to the distance between the best possible approximation afforded by
  the chosen basis functions and $J^*$. These guarantees will indicate that
  the SALP computes approximations that are of comparable quality to the
  projection\footnote{ Note that it is intractable to directly compute the
    projection since $J^*$ is unknown.
  } of $J^*$ on to the linear span of $\Phi$. 
\item Performance bounds (Section~\ref{sec:perf_bound}): While it is desirable
  to approximate $J^*$ as closely as possible, an important concern is the
  quality of the policies generated by acting greedily according to such
  approximations, as measured by their performance. We present bounds on the
  performance loss incurred, relative to the optimal policy, in using an SALP
  approximation.
\item Sample complexity results (Section~\ref{sec:sampling}): The SALP is a
  linear program with a large number of constraints as well as variables. In
  practical implementations, one may consider a `sampled' version of this
  program that has a manageable number of variables and constraints. We
  present sample complexity guarantees that establish bounds on the number of
  samples required to produce a good approximation to the solution of the
  SALP. These bounds scale linearly with the number of basis function $K$ and
  are independent of the size of the state space $\Xscr$.
\end{itemize}

\subsection{Idealized Assumptions}

Given the broad scope of problems addressed by ADP algorithms, analyses of
such algorithms typically rely on an `idealized' assumption of some sort. In
the case of the ALP, one either assumes the ability to solve a linear program
with as many constraints as there are states, or, absent that, knowledge of a
certain idealized sampling distribution, so that one can then proceed with
solving a `sampled' version of the ALP. Our analysis of the SALP in this
section is predicated on the knowledge of this same idealized sampling
distribution. In particular, letting $\mu^*$ be an optimal policy and
$P_{\mu^*}$ the associated transition matrix, we will require access to
samples drawn according to the distribution $\pi_{\mu^*,\nu}$ given by
\begin{equation}\label{eq:pimustar}
\pi_{\mu^*,\nu}^\top \defeq (1 - \alpha) \nu^\top(I-\alpha P_{\mu^*})^{-1}
=
(1-\alpha) \sum_{t=0}^\infty \alpha^t \nu^\top P_{\mu^*}^t.
\end{equation}
Here $\nu$ is an arbitrary initial distribution over states. The distribution
$\pi_{\mu^*,\nu}$ may be interpreted as yielding the discounted expected
frequency of visits to a given state when the initial state is distributed
according to $\nu$ and the system runs under the policy $\mu^*$. We note that
the `sampled' ALP introduced by \citet{deFarias2} requires access to states
sampled according to precisely this distribution. Theoretical analyses of
other approaches to approximate DP such as approximate value iteration and
temporal difference learning similarly rely on the knowledge of specialized
sampling distributions that cannot be obtained tractably
\citep[see][]{deFarias00}.

\subsection{A Simple Approximation Guarantee}\label{sec:simple_bound}

This section presents a first, simple approximation guarantee for the
following specialization of the SALP in \eqref{eq:salp},
\begin{equation}\label{eq:salp3}
  \begin{array}{lll}
    \maximize_{r, s} & \nu^\top \Phi r \\
    \subjectto & \Phi r \leq T\Phi r + s, \\
    & \pi_{\mu^*,\nu}^\top s \leq \theta,\quad s \geq 0.
  \end{array}
\end{equation}
Here, the constraint violation distribution is set to be $\pi_{\mu^*,\nu}$.

Before we state our approximation guarantee, consider the following function:
\begin{equation}\label{eq:flp}
  \begin{array}{llll}
    \ell(r,\theta) \defeq
    & \minimize_{s,\gamma} & \gamma/(1 - \alpha) \\
    & \subjectto & \Phi r - T\Phi r  \leq s + \gamma \1,
    \\
    & & \pi_{\mu^*,\nu}^\top s \leq \theta,\quad s \geq 0.
  \end{array}
\end{equation}
Suppose we are given a vector $r$ of basis function weights and a violation
budget $\theta$. As we will shortly demonstrate, $\ell(r,\theta)$ defines the
minimum translation (in the direction of the vector $\1$) of $r$ such so as to
get a feasible solution for \eqref{eq:salp3}. We will denote by $s(r,\theta)$
the $s$ component of the solution to \eqref{eq:flp}. The following lemma,
whose proof may be found in Appendix~\ref{sec:ell-proof}, characterizes the
function $\ell(r,\theta)$:

\begin{lemma}\label{le:ell}
  For any $r \in \R^K$ and $\theta \geq 0$:
  \begin{enumerate}[(i)]
  \item $\ell(r,\theta)$ is a finite-valued, decreasing, piecewise linear,
    convex function of $\theta$.
  \item\label{le:ell:2}
    \[
    \ell(r,\theta) \leq \frac{1+\alpha}{1-\alpha} \|  J^* - \Phi r\|_\infty.
    \]
  \item\label{le:ell:3} 
    The right partial derivative of $\ell(r,\theta)$ with
    respect to $\theta$ satisfies
    \[
    \frac{\partial^+}{\partial \theta^+} \ell(r,0) 
    = - \left((1 - \alpha) \sum_{x\in\Omega(r)} \pi_{\mu^*,\nu}(x)\right)^{-1},
    \]
    where 
    \[
    \Omega(r) \defeq \argmax_{x\in\Xscr}\ \Phi r(x) - T\Phi r (x).
    \]
  \end{enumerate}
\end{lemma}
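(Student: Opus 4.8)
The plan is to analyze the linear program \eqref{eq:flp} directly, exploiting that for fixed $r$ it is a parametric LP in the right-hand side parameter $\theta$. Write $\delta \defeq \Phi r - T\Phi r \in \R^\Xscr$ and $\delta^* \defeq \max_{x} \delta(x)$, so that $\Omega(r) = \{x : \delta(x) = \delta^*\}$. The key observation is that for fixed $s$, the optimal $\gamma$ in \eqref{eq:flp} is $\gamma = \max_x (\delta(x) - s(x))$, so we can eliminate $\gamma$ and write
\[
\ell(r,\theta)(1-\alpha) = \minimize_{s \geq 0,\ \pi_{\mu^*,\nu}^\top s \leq \theta}\ \max_{x\in\Xscr}\ \big(\delta(x) - s(x)\big).
\]
From this form, most of the claims follow cleanly. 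Since we may always take $s = 0$, the value is at most $\delta^* = \|\delta\|_\infty \leq \|(\Phi r - J^*) - \alpha P_{\mu_r}(\Phi r - J^*)\|_\infty$-type bounds; more directly, using $T J^* = J^*$ and that $T$ is a (sup-norm) contraction and $\alpha$-Lipschitz, $\|\delta\|_\infty = \|\Phi r - T\Phi r\|_\infty \leq \|\Phi r - J^*\|_\infty + \|T J^* - T\Phi r\|_\infty \leq (1+\alpha)\|J^* - \Phi r\|_\infty$, which divided by $(1-\alpha)$ gives part (ii). Finiteness is immediate ($s=0$ is feasible and the objective is bounded below since $\pi_{\mu^*,\nu}^\top s \leq \theta$ with $s\geq 0$ limits how much any coordinate matters). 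Monotonicity (decreasing in $\theta$): enlarging the budget enlarges the feasible set. Convexity and piecewise linearity: this is the standard fact that the optimal value of an LP as a function of a right-hand-side constraint constant is convex and piecewise linear (the value function of a parametric LP); alternatively one can verify it from the $\max$-of-affine structure by an explicit ``water-filling'' description of the optimizer.

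For part (iii) — the right derivative at $\theta = 0$ — the plan is to produce the explicit optimal $s$ for small $\theta$. At $\theta = 0$ we must have $s = 0$ and the value is $\delta^*/(1-\alpha)$. For small $\theta > 0$, the marginal way to decrease $\max_x(\delta(x) - s(x))$ is to ``shave down'' all the maximizing coordinates $x \in \Omega(r)$ uniformly: set $s(x) = \epsilon$ for $x \in \Omega(r)$ and $s(x) = 0$ otherwise, which costs $\pi_{\mu^*,\nu}^\top s = \epsilon \sum_{x\in\Omega(r)} \pi_{\mu^*,\nu}(x) =: \epsilon\, p$, and reduces the max from $\delta^*$ to $\delta^* - \epsilon$ as long as $\epsilon$ is smaller than the gap between $\delta^*$ and the next-largest value of $\delta$. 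Hence for $\theta$ small, the optimal $\epsilon$ is $\theta/p$, giving $\ell(r,\theta)(1-\alpha) = \delta^* - \theta/p$, so $\partial^+_\theta \ell(r,0) = -1/((1-\alpha)p)$, which is the claimed formula. One must argue this shaving strategy is optimal near $\theta=0$: any feasible $s$ with budget $\theta$ can reduce the objective by at most (total mass it places on $\Omega(r)$)$/p$-worth, and more carefully, reducing $\max_{x\in\Omega(r)}(\delta(x)-s(x))$ below $\delta^* - c$ forces $s(x) \geq c$ for \emph{every} $x\in\Omega(r)$, costing at least $c\,p$; this gives the matching lower bound $\ell(r,\theta)(1-\alpha) \geq \delta^* - \theta/p$ for all $\theta$, with equality for small $\theta$.

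The main obstacle I anticipate is the careful handling of degeneracy/ties in part (iii): the formula for the \emph{right} derivative is clean precisely because one shaves exactly the arg-max set $\Omega(r)$, but one must be sure that for $\theta$ in a genuine right-neighborhood of $0$ no other coordinate enters $\Omega(r)$ prematurely and that the ``cost per unit reduction'' is exactly $p$ and not less — i.e., that spreading slack onto non-maximizing coordinates is strictly wasteful until the max has been pulled down to their level. This is intuitively obvious from the $\max$-of-affine picture but needs a short argument; everything else (finiteness, monotonicity, convexity, piecewise linearity, and the $\|J^*-\Phi r\|_\infty$ bound) is routine once the $\gamma$ variable has been eliminated and the contraction property of $T$ invoked.
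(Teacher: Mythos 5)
Your proposal is correct and follows essentially the same route as the paper: part (ii) via the contraction bound $\|\Phi r - T\Phi r\|_\infty \leq (1+\alpha)\|J^*-\Phi r\|_\infty$ with $s=0$, part (i) via feasibility of $s=0$, boundedness through the budget constraint, and standard parametric-LP convexity, and part (iii) via the explicit optimal solution that uniformly shaves the slack over $\Omega(r)$ for small $\theta$. In fact your matching lower-bound argument for (iii) (any $s$ achieving value $\delta^*-c$ must place slack at least $c$ on every state in $\Omega(r)$, costing at least $c\sum_{x\in\Omega(r)}\pi_{\mu^*,\nu}(x)$) supplies the optimality verification that the paper's proof only asserts as ``easy to verify.''
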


Armed with this definition, we are now in a position to state our first, crude
approximation guarantee:
\begin{theorem}
\label{th:bound1}
Suppose that $r_{\text{SALP}}$ is an optimal solution to the SALP
\eqref{eq:salp3}, and let $r^*$ satisfy
\[
r^* \in \argmin_r\ \| J^* - \Phi r\|_\infty.
\]
Then,
\begin{equation}\label{eq:bound1}
\|J^* - \Phi r_{\text{SALP}} \|_{1,\nu}
\leq
\|J^* - \Phi r^* \|_{\infty} + \ell(r^*,\theta) + \frac{2\theta}{1-\alpha}.
\end{equation}
\end{theorem}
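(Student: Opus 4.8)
The plan is to construct an explicit feasible solution for the SALP \eqref{eq:salp3} out of $r^*$, bound its objective value from below, and then translate that into a bound on $\|J^* - \Phi r_{\text{SALP}}\|_{1,\nu}$. Concretely: let $s^* \defeq s(r^*,\theta)$ be the slack component of the optimizer of \eqref{eq:flp}, and set $\gamma^* \defeq (1-\alpha)\ell(r^*,\theta)$, so that $\Phi r^* - T\Phi r^* \leq s^* + \gamma^*\1$ and $\pi_{\mu^*,\nu}^\top s^* \leq \theta$. The key observation is that the translated vector $\tilde r$ defined by $\Phi \tilde r \defeq \Phi r^* - \gamma^*\1/(1-\alpha)$ (which is legitimate since $\1$ is in the span of $\Phi$ by the Assumption) satisfies $\Phi\tilde r \leq T\Phi\tilde r + s^*$: indeed subtracting a constant $c\1$ from $J$ shifts $TJ$ by $\alpha c\1$, so $\Phi\tilde r - T\Phi\tilde r = (\Phi r^* - T\Phi r^*) - (1-\alpha)\cdot\gamma^*/(1-\alpha)\cdot\1 = (\Phi r^* - T\Phi r^*) - \gamma^*\1 \leq s^*$. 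Hence $(\tilde r, s^*)$ is feasible for \eqref{eq:salp3}, and therefore $\nu^\top\Phi r_{\text{SALP}} \geq \nu^\top\Phi\tilde r = \nu^\top\Phi r^* - \ell(r^*,\theta)$.

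Next I would turn the objective inequality into an error bound. Since $r_{\text{SALP}}$ is feasible for \eqref{eq:salp3}, there is an accompanying slack $s_{\text{SALP}} \geq 0$ with $\Phi r_{\text{SALP}} \leq T\Phi r_{\text{SALP}} + s_{\text{SALP}}$ and $\pi_{\mu^*,\nu}^\top s_{\text{SALP}} \leq \theta$. Applying the perturbed-monotonicity / contraction argument for $T$ along the lines of Section~\ref{sec:exactlp}: from $\Phi r_{\text{SALP}} \leq T\Phi r_{\text{SALP}} + s_{\text{SALP}}$ one iterates to get $\Phi r_{\text{SALP}} \leq J^* + (I-\alpha P_{\mu^*})^{-1}s_{\text{SALP}}$ (using a greedy-policy bound: $T\Phi r_{\text{SALP}} \leq T_{\mu^*}\Phi r_{\text{SALP}} = g_{\mu^*} + \alpha P_{\mu^*}\Phi r_{\text{SALP}}$, then unrolling). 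Taking a $\nu$-weighted inner product and using \eqref{eq:pimustar}, the correction term is $\frac{1}{1-\alpha}\pi_{\mu^*,\nu}^\top s_{\text{SALP}} \leq \frac{\theta}{1-\alpha}$. Thus $\nu^\top(\Phi r_{\text{SALP}} - J^*) \leq \frac{\theta}{1-\alpha}$, i.e. the positive part of the error is controlled.

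Now I would combine the two directions. Write $\|J^* - \Phi r_{\text{SALP}}\|_{1,\nu} = \nu^\top(\Phi r_{\text{SALP}} - J^*) + 2\,\nu^\top(J^* - \Phi r_{\text{SALP}})^+$, and bound $\nu^\top(J^* - \Phi r_{\text{SALP}})^+ \leq \nu^\top(J^* - \Phi r_{\text{SALP}}) + \nu^\top(\Phi r_{\text{SALP}} - J^*)^+$, so that $\|J^* - \Phi r_{\text{SALP}}\|_{1,\nu} \leq 2\,\nu^\top(J^* - \Phi r_{\text{SALP}}) + 3\,\nu^\top(\Phi r_{\text{SALP}} - J^*)^+$ — or, more carefully, use $|a| = 2a^+ - a$ to get $\|J^* - \Phi r_{\text{SALP}}\|_{1,\nu} = 2\nu^\top(J^*-\Phi r_{\text{SALP}})^+ - \nu^\top(J^* - \Phi r_{\text{SALP}})$. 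The term $-\nu^\top(J^* - \Phi r_{\text{SALP}}) = \nu^\top\Phi r_{\text{SALP}} - \nu^\top J^*$ is bounded below via the feasibility argument of the first paragraph and above via the second; the positive-part term is handled by noting $(J^* - \Phi r_{\text{SALP}})^+ \leq (J^* - \Phi r^*)^+ + (\Phi r^* - \Phi r_{\text{SALP}})^+$ and using $\|J^* - \Phi r^*\|_\infty$ plus the objective gap. Collecting terms should yield exactly the right-hand side $\|J^* - \Phi r^*\|_\infty + \ell(r^*,\theta) + \frac{2\theta}{1-\alpha}$.

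The main obstacle is the bookkeeping in the last step: getting the constants to come out as stated (a single $\|J^*-\Phi r^*\|_\infty$, a single $\ell(r^*,\theta)$, and $\frac{2\theta}{1-\alpha}$ rather than $\frac{3\theta}{1-\alpha}$) requires carefully choosing which decomposition of the $1$-norm to use and being economical about where the slack budget $\theta$ is spent — once in bounding $\nu^\top(\Phi r_{\text{SALP}} - J^*)^+$ for $r_{\text{SALP}}$, and once implicitly inside $\ell(r^*,\theta)$ for the comparator. The identity $\|J^*-\Phi r_{\text{SALP}}\|_{1,\nu} = 2\nu^\top(J^*-\Phi r_{\text{SALP}})^+ + \nu^\top(\Phi r_{\text{SALP}} - J^*)$ combined with the two one-sided estimates is the cleanest route, and I expect the $\frac{2\theta}{1-\alpha}$ to arise as $\frac{\theta}{1-\alpha}$ from each of the two sign directions. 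Everything else — the translation trick, the perturbed contraction bound, the use of $\1 \in \mathrm{span}(\Phi)$ — is routine given Lemma~\ref{le:ell} and Section~\ref{sec:exactlp}.
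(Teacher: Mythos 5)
Your first two steps coincide with the paper's proof: the translated point $\tilde r$ with $\Phi\tilde r = \Phi r^* - \ell(r^*,\theta)\1$ paired with $s(r^*,\theta)$ is feasible for \eqref{eq:salp3}, and the perturbed contraction bound you describe is exactly Lemma~\ref{le:feas}, combined with $\nu^\top\Delta^* = \pi_{\mu^*,\nu}^\top/(1-\alpha)$ from \eqref{eq:pimustar}. The genuine gap is in the final accounting---precisely the step you flag as the ``main obstacle''---and the route you sketch there would not close. The pointwise bound $(J^*-\Phi r_{\text{SALP}})^+ \leq (J^*-\Phi r^*)^+ + (\Phi r^*-\Phi r_{\text{SALP}})^+$ forces you to control $\nu^\top(\Phi r^*-\Phi r_{\text{SALP}})^+$, but nothing in the problem controls how far $\Phi r_{\text{SALP}}$ can dip below $\Phi r^*$ at individual states: optimality only yields the \emph{signed} aggregate $\nu^\top(\Phi r^*-\Phi r_{\text{SALP}}) \leq \ell(r^*,\theta)$, and feasibility only caps the overestimate side. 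Your guess that $2\theta/(1-\alpha)$ arises as $\theta/(1-\alpha)$ from each sign direction is also not how the constants resolve: if you bound $2\nu^\top e^+$ and $-\nu^\top e$ separately (writing $e \defeq J^* - \Phi r_{\text{SALP}}$), the optimality estimate for $\nu^\top e$ gets spent twice and the best you obtain is $2\|J^*-\Phi r^*\|_\infty + 2\ell(r^*,\theta) + 3\theta/(1-\alpha)$, which is weaker than \eqref{eq:bound1}.

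The bookkeeping that works---and is the paper's---is the equivalent split $\|e\|_{1,\nu} = \nu^\top e + 2\,\nu^\top(\Phi r_{\text{SALP}}-J^*)^+$, which the paper reaches by writing $\|e\|_{1,\nu} \leq \|e + \Delta^*\bar{s}\|_{1,\nu} + \|\Delta^*\bar{s}\|_{1,\nu}$ and noting that $e + \Delta^*\bar{s} \geq 0$ by Lemma~\ref{le:feas}. Both copies of $\theta/(1-\alpha)$ come from the overestimate side: the pointwise consequence $(\Phi r_{\text{SALP}}-J^*)^+ \leq \Delta^*\bar{s}$ of Lemma~\ref{le:feas} (using $\Delta^*\bar{s}\geq 0$) gives $\nu^\top(\Phi r_{\text{SALP}}-J^*)^+ \leq \pi_{\mu^*,\nu}^\top\bar{s}/(1-\alpha) \leq \theta/(1-\alpha)$, counted twice. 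The underestimate side is handled purely through the signed mean, used exactly once: $\nu^\top e \leq \nu^\top(J^*-\Phi\tilde r) \leq \|J^*-\Phi\tilde r\|_\infty \leq \|J^*-\Phi r^*\|_\infty + \ell(r^*,\theta)$. With that split, the two ingredients you already have yield \eqref{eq:bound1} verbatim; as proposed, the last step is not yet a proof.
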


The above theorem allows us to interpret $\ell(r^*,\theta) +
2\theta/(1-\alpha)$ as an upper bound to the approximation error (in the
$\|\cdot\|_{1,\nu}$ norm) associated with the SALP solution $r_{\text{SALP}}$,
relative to the error of the \emph{best} approximation $r^*$ (in the
$\|\cdot\|_\infty$ norm).  This theorem also provides justification for the
intuition, described in Section~\ref{sec:salp}, that a relaxation of the
feasible region of the ALP will result in better value function
approximations.  To see this, consider the following corollary:
\begin{corollary}\label{co:U}
  Define $U_{\text{SALP}}(\theta)$ to be the upper bound
  in \eqref{eq:bound1}, i.e.,
  \[
  U_{\text{SALP}}(\theta)
  \defeq
  \|J^* - \Phi r^* \|_{\infty} + \ell(r^*,\theta) + \frac{2\theta}{1-\alpha}.
  \]
  Then:
  \begin{enumerate}[(i)]
  \item\label{co:U:1}
    \[
    U_{\text{SALP}}(0) 
    \leq 
    \frac{2}{1 - \alpha} \|J^* - \Phi r^* \|_{\infty}.
    \]
  \item\label{co:U:2} 
    The right partial derivative of $U_{\text{SALP}}(\theta)$ with
    respect to $\theta$ satisfies
    \[
    \frac{d^+}{d \theta^+} U_{\text{SALP}}(0) 
    = 
    \frac{1}{1-\alpha}
    \left[
      2
      - \left(\sum_{x\in\Omega(r^*)} \pi_{\mu^*,\nu}(x)\right)^{-1}
    \right].
    \]
  \end{enumerate}
\end{corollary}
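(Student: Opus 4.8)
The plan is to obtain both parts of the corollary directly from Lemma~\ref{le:ell}, observing that $U_{\text{SALP}}(\theta)$ is nothing but $\ell(r^*,\theta)$ plus an additive constant $\|J^*-\Phi r^*\|_\infty$ and the affine term $2\theta/(1-\alpha)$. So everything reduces to evaluating and differentiating $\ell(r^*,\cdot)$, for which parts (ii) and (iii) of Lemma~\ref{le:ell} already give what we need.

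For part~(\ref{co:U:1}), I would specialize to $\theta=0$, so the affine term vanishes and $U_{\text{SALP}}(0) = \|J^*-\Phi r^*\|_\infty + \ell(r^*,0)$. Applying Lemma~\ref{le:ell}(\ref{le:ell:2}) with this choice of $r^*$ gives $\ell(r^*,0) \leq \tfrac{1+\alpha}{1-\alpha}\|J^*-\Phi r^*\|_\infty$, and then collecting the two terms via $1 + \tfrac{1+\alpha}{1-\alpha} = \tfrac{2}{1-\alpha}$ yields the claimed inequality.

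For part~(\ref{co:U:2}), I would first note that by Lemma~\ref{le:ell}(i) the map $\theta \mapsto \ell(r^*,\theta)$ is finite-valued, convex and piecewise linear, hence right-differentiable at $0$; adding a constant and the linear function $2\theta/(1-\alpha)$ preserves right-differentiability, so $\tfrac{d^+}{d\theta^+}U_{\text{SALP}}(0) = \tfrac{\partial^+}{\partial\theta^+}\ell(r^*,0) + \tfrac{2}{1-\alpha}$. Substituting the expression for $\tfrac{\partial^+}{\partial\theta^+}\ell(r^*,0)$ from Lemma~\ref{le:ell}(\ref{le:ell:3}) and factoring out $1/(1-\alpha)$ produces exactly the stated formula.

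I do not anticipate a genuine obstacle: the content is entirely carried by Lemma~\ref{le:ell}, and what remains is a couple of elementary algebraic simplifications plus the trivial observation that right-differentiability survives the addition of an affine function. The only point worth stating carefully is that $r^*$ appearing here is the same minimizer of $\|J^*-\Phi r\|_\infty$ used in Theorem~\ref{th:bound1}, so that the bounds of Lemma~\ref{le:ell} apply verbatim. I would also add a sentence of interpretation (not needed for the proof) pointing out that the right derivative in part~(\ref{co:U:2}) is strictly negative exactly when $\sum_{x\in\Omega(r^*)}\pi_{\mu^*,\nu}(x) < 1/2$, which is the precise sense in which moving $\theta$ away from $0$ provably improves the approximation bound and realizes the cartoon of Figure~\ref{fig:alp-salp}.
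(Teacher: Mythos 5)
Your proof is correct and follows essentially the same route as the paper, which simply invokes Parts~(\ref{le:ell:2}) and (\ref{le:ell:3}) of Lemma~\ref{le:ell}; you have merely made the algebra ($1+\tfrac{1+\alpha}{1-\alpha}=\tfrac{2}{1-\alpha}$ and the addition of the affine term's derivative $\tfrac{2}{1-\alpha}$) explicit. Nothing is missing.
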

\begin{proof}
  The result follows immediately from Parts~(\ref{le:ell:2}) and
  (\ref{le:ell:3}) of Lemma~\ref{le:ell}.
\end{proof}

Suppose that $\theta = 0$, in which case the SALP \eqref{eq:salp3} is
identical to the ALP \eqref{eq:alp}, thus,
$r_{\text{SALP}}=r_{\text{ALP}}$. Applying Part~(\ref{co:U:1}) of
Corollary~\ref{co:U}, we have, for the ALP, the approximation error bound
\begin{equation}\label{eq:bvr-bound1}
\|J^* - \Phi r_{\text{ALP}} \|_{1,\nu}
\leq
\frac{2}{1 - \alpha} \|J^* - \Phi r^* \|_{\infty}.
\end{equation}
This is precisely Theorem~2 of \citet{deFarias1}; we recover their
approximation guarantee for the ALP.

Now observe that, from Part~(\ref{co:U:2}) of Corollary~\ref{co:U}, if the set
$\Omega(r^*)$ is of very small probability according to the distribution
$\pi_{\mu^*,\nu}$, we expect that the upper bound $U_{\text{SALP}}(\theta)$
will decrease dramatically as $\theta$ is increased from $0$.\footnote{Already
  if $\pi_{\mu^*,\nu}(\Omega(r^*)) < 1/2$, $\frac{d^+}{d \theta^+}
  U_{\text{SALP}}(0) < 0.$} In other words, if the Bellman error $\Phi r^*(x)
- T\Phi r^*(x)$ produced by $r^*$ is maximized at states $x$ that are
collectively of small very probability, then we expect to have a choice of
$\theta>0$ for which
\[
U_{\text{SALP}}(\theta) 
\ll 
U_{\text{SALP}}(0)
\leq
\frac{2}{1 - \alpha} \|J^* - \Phi r^* \|_{\infty}.
\]
In this case, the bound \eqref{eq:bound1}
on the SALP solution will be an improvement over the bound
\eqref{eq:bvr-bound1} on the ALP solution.

Before we present the proof of Theorem~\ref{th:bound1} we present an auxiliary
claim that we will have several opportunities to use. The proof can be found
in Appendix~\ref{sec:ell-proof}.
\begin{lemma}\label{le:feas}
  Suppose that the vectors $J\in\R^\Xscr$ and $s\in\R^\Xscr$ satisfy
  \[
  J \leq T_{\mu^*} J + s.
  \]
  Then,
  \[
  J  \leq J^* +  \Delta^* s,
  \]
  where
  \[
  \Delta^* \defeq
  \sum_{k=0}^\infty (\alpha P_{\mu^*})^k = (I - \alpha P_{\mu^*})^{-1},
  \]
  and $P_{\mu^*}$ is the transition probability matrix corresponding to an
  optimal policy.

  In particular, if $(r,s)$ is feasible for the LP
  \eqref{eq:salp3}. Then,
  \[
  \Phi r  \leq J^* +  \Delta^* s.
  \]
\end{lemma}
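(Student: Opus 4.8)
The plan is to prove the general statement by a short algebraic manipulation and then obtain the ``in particular'' claim as an immediate corollary. Starting from the hypothesis $J \leq T_{\mu^*} J + s$ and substituting the definition $T_{\mu^*} J = g_{\mu^*} + \alpha P_{\mu^*} J$, I would rearrange to the linear inequality $(I - \alpha P_{\mu^*}) J \leq g_{\mu^*} + s$. The key structural fact is that, since $P_{\mu^*}$ is a stochastic matrix and $\alpha \in (0,1)$, the Neumann series $\Delta^* = \sum_{k=0}^\infty (\alpha P_{\mu^*})^k = (I - \alpha P_{\mu^*})^{-1}$ converges and is an \emph{entrywise nonnegative} matrix. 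Left-multiplying both sides of the inequality by $\Delta^*$ therefore preserves its direction, yielding $J \leq \Delta^* g_{\mu^*} + \Delta^* s$. Finally I would identify $\Delta^* g_{\mu^*} = \sum_{k=0}^\infty \alpha^k P_{\mu^*}^k g_{\mu^*} = J_{\mu^*}$, which equals $J^*$ because $\mu^*$ is an optimal policy, so $J \leq J^* + \Delta^* s$.

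As an alternative that makes the role of monotonicity explicit, I would note the same conclusion follows by iterating $T_{\mu^*}$: applying monotonicity of $T_{\mu^*}$ to $J \leq T_{\mu^*} J + s$ gives $T_{\mu^*} J \leq T_{\mu^*}^2 J + \alpha P_{\mu^*} s$, and an easy induction shows $J \leq T_{\mu^*}^{k+1} J + \sum_{i=0}^{k} (\alpha P_{\mu^*})^i s$ for every $k \geq 0$. Letting $k \to \infty$ and using that $T_{\mu^*}$ is an $\alpha$-contraction with fixed point $J_{\mu^*} = J^*$, together with convergence of the partial sums to $\Delta^* s$, recovers $J \leq J^* + \Delta^* s$. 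I expect to present the first argument in the main text and can fall back on the second if a reader wants the monotonicity viewpoint.

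For the second claim, suppose $(r,s)$ is feasible for \eqref{eq:salp3}, so $\Phi r \leq T \Phi r + s$. Since the Bellman operator satisfies $T J = \min_{\mu} T_\mu J \leq T_{\mu^*} J$ componentwise for any $J$, we get $\Phi r \leq T_{\mu^*} \Phi r + s$; applying the first part with $J = \Phi r$ then gives $\Phi r \leq J^* + \Delta^* s$.

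There is essentially no serious obstacle here; the only point requiring care is justifying that $\Delta^*$ is well-defined and entrywise nonnegative (equivalently, that the partial sums in the iterative argument converge), which rests on $\alpha < 1$ combined with the nonnegativity of each power $P_{\mu^*}^k$ and the fact that $P_{\mu^*}$ has unit operator norm on $\R^\Xscr$ in the sup-norm.
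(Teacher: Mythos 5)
Your proof is correct. Your primary argument takes a mildly different route from the paper: you rearrange the hypothesis to $(I-\alpha P_{\mu^*})J \leq g_{\mu^*}+s$ and left-multiply by the entrywise nonnegative resolvent $\Delta^* = (I-\alpha P_{\mu^*})^{-1}$, identifying $\Delta^* g_{\mu^*} = J_{\mu^*} = J^*$, whereas the paper argues by monotonicity of $T_{\mu^*}$, repeatedly applying it to $J \leq T_{\mu^*}J + s$ and using $T_{\mu^*}^k J \to J^*$ --- which is precisely your fallback argument. The two are essentially equivalent (both rest on convergence and nonnegativity of the Neumann series for $\alpha P_{\mu^*}$); the algebraic version is arguably more self-contained since it makes explicit where $J_{\mu^*}=J^*$ and the order-preservation of $\Delta^*$ enter, while the paper's operator-iteration version avoids writing out the resolvent identity and generalizes more readily to settings where one only has monotonicity and a contraction. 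Your handling of the ``in particular'' claim, via $T\Phi r \leq T_{\mu^*}\Phi r$ componentwise, matches what the paper implicitly uses and is correct.
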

A feasible solution to the ALP is necessarily a lower bound to the optimal
cost-to-go function, $J^*$. This is no longer the case for the SALP; the above
lemma characterizes the extent to which this restriction is relaxed. 

We now proceed with the proof of Theorem~\ref{th:bound1}:
\begin{proof}[\normalfont\sffamily\bfseries Proof of Theorem~\ref{th:bound1}]
First, define the weight vector $\tilde{r}\in\R^m$ by
\[
\Phi \tilde{r} = \Phi r^* - \ell(r^*,\theta)\1,
\]
and set $\tilde{s} = s(r^*,\theta)$, the $s$-component of the solution to the LP \eqref{eq:flp} with parameters $r^*$ and $\theta$. We will demonstrate that $(\tilde{r},\tilde{s})$ is feasible for \eqref{eq:salp}.
Observe that, by the definition of the LP \eqref{eq:flp},
\[
\Phi r^* \leq T \Phi r^* + \tilde{s} + (1-\alpha) \ell(r^*,\theta) \1.
\]
Then,
\[
\begin{split}
T \Phi \tilde{r} & = T \Phi r^*
- \alpha \ell(r^*,\theta) \1 \\
& \geq \Phi r^* - \tilde{s} - (1-\alpha) \ell(r^*,\theta) \1
- \alpha \ell(r^*,\theta) \1 \\
& = \Phi \tilde{r} - \tilde{s}.
\end{split}
\]
Now, let $(r_{\text{SALP}},\bar{s})$ be the solution to the SALP
\eqref{eq:salp3}. By Lemma~\ref{le:feas},
\[
\begin{split}
  \| J^* - \Phi r_{\text{SALP}} \|_{1,\nu}
  & \leq
  \| J^* - \Phi r_{\text{SALP}} + \Delta^* \bar{s} \|_{1,\nu}
  +
  \| \Delta^* \bar{s} \|_{1,\nu}
  \\
  & = \nu^\top (J^* - \Phi r_{\text{SALP}} + \Delta^* \bar{s})
  + \nu^\top \Delta^* \bar{s}
  \\
  & = \nu^\top (J^* - \Phi r_{\text{SALP}})
  + \frac{2 \pi_{\mu^*,\nu}^\top \bar{s}}{1 - \alpha}
  \\
  & \leq \nu^\top (J^* - \Phi r_{\text{SALP}})
  + \frac{2 \theta}{1 - \alpha}
  \\
  & \leq \nu^\top (J^* - \Phi \tilde{r})
  + \frac{2 \theta}{1 - \alpha}
  \\
  & \leq \|J^* - \Phi \tilde{r}\|_\infty
  + \frac{2 \theta}{1 - \alpha}
  \\
  & \leq \|J^* - \Phi r^*\|_\infty
  + \|\Phi r^* - \Phi\tilde{r}\|_\infty
  + \frac{2 \theta}{1 - \alpha}
  \\
  & =  
  \|J^* - \Phi r^*\|_\infty + \ell(r^*,\theta) + \frac{2 \theta}{1 - \alpha},
\end{split}
\]
as desired.
\end{proof}

While Theorem~\ref{th:bound1} reinforces the intuition (shown via
Figure~\ref{fig:alp-salp}) that the SALP will permit closer approximations to
$J^*$ than the ALP, the bound leaves room for improvement:
\begin{enumerate}
\item \label{pt1} The right hand side of our bound measures projection error,
  $\|J^* - \Phi r^*\|_{\infty}$ in the $\|\cdot\|_\infty$ norm. Since it is
  unlikely that the basis functions $\Phi$ will provide a uniformly good
  approximation over the entire state space, the right hand side of our bound
  could be quite large.
\item \label{pt2} As suggested by \eqref{eq:state_rel_wts}, the choice of
  state relevance weights can significantly influence the solution. In
  particular, it allows us to choose regions of the state space where we would
  like a better approximation of $J^*$. The right hand side of our bound,
  however, is independent of $\nu$.
\item \label{pt3} Our guarantee does not suggest a concrete choice of the
  violation budget parameter $\theta$.
\end{enumerate}

The next section will present a substantially refined approximation bound,
that will address these issues.

\subsection{A Stronger Approximation Guarantee} \label{sec:approx_gur}

With the intent of deriving stronger approximation guarantees, we begin this
section by introducing a `nicer' measure of the quality of approximation
afforded by $\Phi$. In particular, instead of measuring the approximation
error $J^* - \Phi r^*$ in the $\|\cdot\|_{\infty}$ norm as we did for our
previous bounds, we will use a weighted max norm defined according to:
\[
\|J\|_{\infty, 1/\psi}  \defeq \max_{x \in \Xscr}\ \frac{|J(x)|}{\psi(x)}.
\]
Here, $\psi\colon \Xscr \rightarrow [1,\infty)$ is a given `weighting'
function. The weighting function $\psi$ allows us to weight approximation
error in a non-uniform fashion across the state space and in this manner
potentially ignore approximation quality in regions of the state space that
are less relevant. We define $\Psi$ to be the set of all weighting functions,
i.e.,
\[
\Psi \defeq \left\{ \psi \in \R^\Xscr\ :\ \psi \geq \1\right\}.
\]
Given a particular $\psi \in \Psi$, we define a scalar
\[
\beta(\psi) \defeq \max_{x,a}\ 
\left|\frac{\sum_{x'} P_a(x,x') \psi(x')}{\psi(x)} \right|. 
\]
One may view $\beta(\psi)$ as a measure of the `stability' of the system. 

In addition to specifying the sampling distribution $\pi$, as we did in
Section~\ref{sec:simple_bound}, we will specify (implicitly) a particular
choice of the violation budget $\theta$. In particular, we will consider
solving the following SALP:
\begin{equation}\label{eq:salp4}
  \begin{array}{lll}
    \maximize_{r,s} & \nu^\top \Phi r - \frac{2\pi_{\mu^*,\nu}^\top s}{1-\alpha}\\
    \subjectto & \Phi r \leq T\Phi r + s,\quad s \geq 0. 
  \end{array}
\end{equation}
It is clear that \eqref{eq:salp4} is equivalent to \eqref{eq:salp3} for a
specific choice of $\theta$. We then have:
\begin{theorem}\label{th:smooth_bound}
  If $r_{\text{SALP}}$ is an optimal solution to \eqref{eq:salp4}, then
\[
\|J^* - \Phi r_{\text{SALP}}\|_{1,\nu}
\leq
\inf_{r,\psi \in \Psi}\ 
\| J^* - \Phi r\|_{\infty, \1/\psi}
\left(
\nu^\top \psi
+
\frac
{2(\pi_{\mu^*,\nu}^\top \psi + 1)(\alpha \beta(\psi) + 1)}
{1 - \alpha}
\right).
\]
\end{theorem}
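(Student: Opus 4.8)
The plan is to mirror the structure of the proof of Theorem~\ref{th:bound1}. Fix an arbitrary $r\in\R^K$ and $\psi\in\Psi$, and set $\epsilon \defeq \|J^*-\Phi r\|_{\infty,1/\psi}$, so that $|J^*-\Phi r|\le \epsilon\psi$ componentwise. I would then proceed in four steps: (i) use $(r,\psi)$ to build a feasible solution of \eqref{eq:salp4} whose objective value is not too far below $\nu^\top J^*$; (ii) conclude that the optimal value of \eqref{eq:salp4} is at least this much; (iii) combine the optimality of $r_{\text{SALP}}$ with Lemma~\ref{le:feas} to turn this into an upper bound on $\|J^*-\Phi r_{\text{SALP}}\|_{1,\nu}$; and (iv) optimize over $(r,\psi)$.

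For step (i), the crucial estimate is that the weighting function $\psi$ controls the Bellman error of $\Phi r$:
\[
\Phi r - T\Phi r \;\le\; \epsilon\,(1+\alpha\beta(\psi))\,\psi .
\]
This follows by writing $\Phi r - T\Phi r = (\Phi r - J^*) + (TJ^* - T\Phi r)$: the first term is at most $\epsilon\psi$ by definition of $\epsilon$, while for the second, since $T$ is built from a minimum over actions, $TJ^* - T\Phi r \le \max_a \alpha P_a(J^*-\Phi r) \le \alpha\epsilon\,\beta(\psi)\,\psi$ by the definition of $\beta(\psi)$. Now take $\tilde s \defeq (\Phi r - T\Phi r)^+$; then $(r,\tilde s)$ is feasible for \eqref{eq:salp4} and $0\le \tilde s \le \epsilon(1+\alpha\beta(\psi))\psi$, so its objective value, and hence the optimal value of \eqref{eq:salp4}, is at least $\nu^\top\Phi r - \tfrac{2}{1-\alpha}\,\epsilon(1+\alpha\beta(\psi))\,\pi_{\mu^*,\nu}^\top\psi$. (One could instead translate $\Phi r$ downward by a multiple of $\1$, exactly as in the proof of Theorem~\ref{th:bound1}, which tightens constants but is not essential.)

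For step (iii), let $(r_{\text{SALP}},\bar s)$ be optimal for \eqref{eq:salp4} (a finite LP, with objective bounded above since $\1$ lies in the span of $\Phi$). By Lemma~\ref{le:feas}, $\Phi r_{\text{SALP}}\le J^*+\Delta^*\bar s$, and recall $\Delta^*\ge 0$ entrywise and $\nu^\top\Delta^* = \tfrac{1}{1-\alpha}\pi_{\mu^*,\nu}^\top$. Writing $J^*-\Phi r_{\text{SALP}} = (J^*-\Phi r_{\text{SALP}}+\Delta^*\bar s) - \Delta^*\bar s$ and using the triangle inequality together with the nonnegativity of the first summand and of $\Delta^*\bar s$, exactly as in the proof of Theorem~\ref{th:bound1}, one gets
\[
\|J^*-\Phi r_{\text{SALP}}\|_{1,\nu}
\;\le\;
\nu^\top(J^*-\Phi r_{\text{SALP}}) + \frac{2\pi_{\mu^*,\nu}^\top\bar s}{1-\alpha}
\;=\;
\nu^\top J^* - \Bigl(\nu^\top\Phi r_{\text{SALP}} - \frac{2\pi_{\mu^*,\nu}^\top\bar s}{1-\alpha}\Bigr).
\]
The parenthesized quantity is the optimal value of \eqref{eq:salp4}, which by step (ii) is at least $\nu^\top\Phi r - \tfrac{2}{1-\alpha}\,\epsilon(1+\alpha\beta(\psi))\,\pi_{\mu^*,\nu}^\top\psi$. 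Substituting, and then using $\nu^\top(J^*-\Phi r)\le \nu^\top|J^*-\Phi r|\le \epsilon\,\nu^\top\psi$, gives
\[
\|J^*-\Phi r_{\text{SALP}}\|_{1,\nu}
\;\le\;
\epsilon\Bigl(\nu^\top\psi + \frac{2(1+\alpha\beta(\psi))\,\pi_{\mu^*,\nu}^\top\psi}{1-\alpha}\Bigr).
\]
Taking the infimum over $r$ and $\psi\in\Psi$ (and bounding $\pi_{\mu^*,\nu}^\top\psi\le \pi_{\mu^*,\nu}^\top\psi + 1$) yields the claimed inequality. The main obstacle is the Bellman-error estimate in step (i) — making the weighted max-norm interact correctly with the minimization over actions in $T$ and with the stability constant $\beta(\psi)$; everything downstream is the bookkeeping from the proof of Theorem~\ref{th:bound1}, now carried out with $\pi_{\mu^*,\nu}$-weighted quantities in place of $\|\cdot\|_\infty$. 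I expect the published argument to run a slightly more conservative version of the last few estimates, which is what produces the additive $1$ appearing alongside $\pi_{\mu^*,\nu}^\top\psi$ in the statement.
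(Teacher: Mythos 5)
Your proof is correct and follows essentially the same route as the paper: bound $\|J^*-\Phi r_{\text{SALP}}\|_{1,\nu}$ by $\nu^\top J^*$ minus the SALP optimal value via Lemma~\ref{le:feas} (the chain of inequalities from Theorem~\ref{th:bound1}), exhibit a feasible point of \eqref{eq:salp4} built from an arbitrary $(r,\psi)$, and control its slack through the Bellman-error estimate $\|T\Phi r-\Phi r\|_{\infty,\1/\psi}\le(1+\alpha\beta(\psi))\|J^*-\Phi r\|_{\infty,\1/\psi}$. The only deviation is that you use the feasible pair $\bigl(r,(\Phi r-T\Phi r)^+\bigr)$ directly instead of the paper's downward-translated pair with slack $\tilde{\epsilon}(1-1/\psi)$; this yields the factor $2\pi_{\mu^*,\nu}^\top\psi$ in place of the paper's $1+2\pi_{\mu^*,\nu}^\top\psi$, both of which are dominated by the stated $2(\pi_{\mu^*,\nu}^\top\psi+1)$, so the claimed bound follows.
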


Before presenting a proof for this approximation guarantee, it is worth
placing the result in context to understand its implications. For this, we
recall a closely related result shown by \citet{deFarias1} for the ALP. In
particular, \citet{deFarias1} showed that \emph{given} an appropriate
weighting function (in their context, a `Lyapunov' function) $\psi$, one may
solve an ALP, with $\psi$ in the span of the basis functions $\Phi$. The
solution $r_{\text{ALP}}$ to such an ALP then satisfies:
\begin{equation}\label{eq:alppsi}
\|J^* - \Phi r_{\text{ALP}}\|_{1,\nu} \leq \inf_r\ \| J^* - \Phi r\|_{\infty,
  \1/\psi} \frac{2 \nu^\top \psi}{1 - \alpha \beta(\psi)},
\end{equation}
provided that $\beta(\psi) < 1/\alpha$. Selecting an appropriate $\psi$ in
their context is viewed to be an important task for practical performance and
often requires a good deal of problem specific analysis; \citet{deFarias1}
identify appropriate $\psi$ for several queueing models. Note that this is
equivalent to identifying a desirable basis function. In contrast, the
guarantee we present optimizes over \emph{all possible} $\psi$ (including
those $\psi$ that do not satisfy the Lyapunov condition $\beta(\psi) <
1/\alpha$, and that are not necessarily in the span of $\Phi$). 

To make the comparison more precise, let us focus attention on a particular
choice of $\nu$, namely $\nu = \pi_{\mu^*} \triangleq \pi_*$, the stationary
distribution induced under an optimal policy $\mu^*$. In this case,
restricting attention to the set of weighting functions 
\[
\bar{\Psi} = \{\psi \in \Psi\ :\ 
\alpha \beta(\psi) < 1\},
\]
so as to make the two bounds comparable, Theorem~\ref{th:smooth_bound}
guarantees that
\[
\begin{split}
\|J^* - \Phi r_{\text{SALP}}\|_{1,\nu}
& \leq
\inf_{r,\psi\in\bar{\Psi}}\ 
\| J^* - \Phi r\|_{\infty, \1/\psi}
\left(
\pi_*^\top \psi
+
\frac
{2(\pi_*^\top \psi + 1)(\alpha \beta(\psi) + 1)}
{1 - \alpha}
\right)
\\
& \leq
\inf_{r,\psi\in\bar{\Psi}}\ 
\| J^* - \Phi r\|_{\infty, \1/\psi}
\frac
{9\pi_*^\top \psi}
{1 - \alpha}.
\end{split}
\]
On the other hand, observing that $\beta(\psi) \geq 1$ for all $\psi\in\Psi$,
the right hand side for the ALP bound \eqref{eq:alppsi} is at least
\[
\inf_{r}\ 
\| J^* - \Phi r\|_{\infty, \1/\psi}
\frac
{2 \pi_*^\top \psi}
{1 - \alpha}.
\]
Thus, the approximation guarantee of Theorem~\ref{th:smooth_bound}
allows us to view the SALP as \emph{automating} the critical procedure of
identifying a good Lyapunov function for a given problem.

\begin{proof}[\normalfont\sffamily\bfseries Proof of
  Theorem~\ref{th:smooth_bound}]

  Let $r \in \R^m$ and $\psi\in\Psi$ be arbitrary. Define the vectors
  $\tilde{\epsilon}, \tilde{s}\in\R^\Xscr$ component-wise by
  \begin{align*}
    \tilde{\epsilon}(x) & \defeq ((\Phi r) (x) - (T \Phi r) (x))^+, \\
    \tilde{s}(x) & \defeq \tilde{\epsilon}(x)\left(1 - \frac{1}{\psi(x)}\right).
  \end{align*}
  Notice that $0 \leq \tilde{s} \leq \tilde{\epsilon}$.

  We next make a few observations. First, define $\tilde{r}$ according to
  \[
  \Phi\tilde{r} = \Phi r - \frac{\| \tilde{\epsilon} \|_{\infty,1/\psi}}{1-\alpha} \1.
  \]
  Observe that, by a similar construction to that in Theorem~\ref{th:bound1},
  $(\tilde{r},\tilde{s})$ is feasible for \eqref{eq:salp4}. Also,
  \[
  \| \Phi r - \Phi \tilde{r} \|_{\infty}
  =
  \frac{\|\tilde{\epsilon} \|_{\infty,1/\psi}}{1-\alpha}
  \leq
  \frac{\|T\Phi r - \Phi r \|_{\infty,1/\psi}}{1-\alpha}.
  \]
  Furthermore,
  \[
  \begin{split}
    \pi_{\mu^*,\nu}^\top \tilde{s}
    &
    =
    \sum_{x\in\Xscr} \pi_{\mu^*,\nu}(x)\tilde{\epsilon}(x)(1 - 1/\psi(x))
    \\
    &
    \leq
    \pi_{\mu^*,\nu}^\top\tilde{\epsilon}
    \\
    &
    \leq
    (\pi_{\mu^*,\nu}^\top \psi) \| \tilde{\epsilon}\|_{\infty,\1/\psi}
    \\
    &
    \leq
    (\pi_{\mu^*,\nu}^\top \psi) \| T\Phi r - \Phi r\|_{\infty,\1/\psi}.
  \end{split}
  \]
  Finally, note that
  \[
  \nu^\top( J^* - \Phi r)
  \leq
  (\nu^\top \psi )\| J^* - \Phi r\|_{\infty,\1/\psi}.
  \]

  Now, suppose that $(r_{\text{SALP}},\bar{s})$ is an optimal solution to the
  SALP \eqref{eq:salp4}.  We have from the last set of inequalities in the
  proof of Theorem~\ref{th:bound1} and the above observations,
  \begin{equation}\label{eq:approx-bound}
  \begin{split}
    \|J^* - \Phi r_{\text{SALP}}\|_{1,\nu} & \leq \nu^\top(J^* - \Phi
    r_{\text{SALP}}) + \frac{2\pi^\top_{\mu^*,\nu}\bar{s}}{1-\alpha}
    \\
    & \leq \nu^\top(J^* - \Phi \tilde{r}) +
    \frac{2\pi^\top_{\mu^*,\nu}\tilde{s}}{1-\alpha}
    \\
    & \leq \nu^\top(J^* - \Phi r ) + \nu^\top(\Phi r - \Phi \tilde{r}) +
    \frac{2\pi^\top_{\mu^*,\nu}\tilde{s}}{1-\alpha}
    \\
    & \leq \nu^\top(J^* - \Phi r )+ \|\Phi r - \Phi \tilde{r}\|_{\infty} +
    \frac{2\pi^\top_{\mu^*,\nu}\tilde{s}}{1-\alpha}
    \\
    & 
    \leq (\nu^\top \psi )\| J^* - \Phi r\|_{\infty,\1/\psi} 
    + \frac{\|T\Phi r - \Phi r \|_{\infty,1/\psi}}{1-\alpha} 
    \left( 1 + 2\pi_{\mu^*,\nu}^\top\psi\right).
\end{split}
\end{equation}

Since our choice of $r$ and $\psi$ were arbitrary, we have: 
\begin{equation}
\label{eq:bellman_error}
\|J^* - \Phi r_{\text{SALP}}\|_{1,\nu}
\leq
\inf_{r,\psi \in \Psi}\ 
(\nu^\top \psi )\| J^* - \Phi r\|_{\infty,\1/\psi} 
+ \frac{\|T\Phi r - \Phi r \|_{\infty,1/\psi}}{1-\alpha} 
\left( 1 + 2\pi_{\mu^*,\nu}^\top\psi\right).
\end{equation}
We would like to relate the Bellman error term $T \Phi r - \Phi r$ on the right
hand side of \eqref{eq:bellman_error} to the approximation error $J^* - \Phi
r$. In order to do so, note that for any vectors $J,\bar{J}\in\R^\Xscr$,
\[
|TJ(x) - T\bar{J}(x)| \leq \alpha \max_{a\in\Ascr}\ 
\sum_{x'\in\Xscr} P_a(x,x') | J(x') - \bar{J}(x') |.
\]
Therefore,
\[
\begin{split}
\| T\Phi r - J^*\|_{\infty,\1/\psi}
& \leq 
\alpha \max_{x,a}\ 
\frac{\sum_{x'} P_a(x,x') | \Phi r(x') - J^*(x') |}{\psi(x)} 
\\
& \leq 
\alpha \max_{x,a}\ 
\frac{\sum_{x'} P_a(x,x')\psi(x') \frac{| \Phi r(x') - J^*(x') |}{\psi(x')}
}{\psi(x)} 
\\
& \leq
\alpha \beta(\psi) \| J^* - \Phi r \|_{\infty,1/\psi}.
\end{split}
\]
Thus,
\begin{equation}\label{eq:int1}
\begin{split}
\| T\Phi r - \Phi r\|_{\infty,\1/\psi}
&
\leq
\| T\Phi r - J^*\|_{\infty,\1/\psi} + \| J^* - \Phi r\|_{\infty,\1/\psi}
\\
&
\leq
 \| J^* - \Phi r \|_{\infty,1/\psi} (1 + \alpha \beta(\psi)).
\end{split}
\end{equation}
Combining \eqref{eq:bellman_error} and \eqref{eq:int1}, we get the desired
result.
\end{proof}

The analytical results provided in Sections~\ref{sec:simple_bound} and
\ref{sec:approx_gur} provide bounds on the quality of the approximation
provided by the SALP solution to $J^*$. The next section presents performance
bounds with the intent of understanding the increase in expected cost incurred
in using a control policy that is greedy with respect to the SALP
approximation in lieu of the optimal policy.

\subsection{A Performance Bound} \label{sec:perf_bound}

We will momentarily present a result that will allow us to interpret the objective of the SALP \eqref{eq:salp4} as an upper bound on the performance loss of a greedy policy with respect to the SALP solution. Before doing so, we briefly introduce some relevant notation. For a given policy $\mu$, we denote
\[
\Delta_\mu
\defeq
\sum_{k=0}^\infty (\alpha P_{\mu})^k 
=
(I - \alpha P_\mu)^{-1}.
\]
Thus, $\Delta^* = \Delta_{\mu^*}$. Given a vector $J\in\R^\Xscr$, let $\mu_J$
denote the greedy policy with respect to $J$. That is, $\mu_J$ satisfies
$T_{\mu_J} J = TJ$. Recall that the policy of interest to us will be
$\mu_{\Phi r_{\text{SALP}}}$ for a solution $r_{\text{SALP}}$ to the
SALP. Finally, for an arbitrary starting distribution over states $\eta$, we
define the `discounted' steady state distribution over states induced by
$\mu_J$ according to
\[
\nu(\eta,J)^\top 
\defeq
(1 - \alpha) \eta^\top \sum_{k=0}^\infty (\alpha P_{\mu_J})^k 
=
 (1-\alpha)\eta^\top \Delta_{\mu_J}.
\]

We have the following upper bound on the increase in cost incurred by using
$\mu_J$ in place of $\mu^*$:
\begin{theorem}
\label{th:performance_bound}
\[
\|J_{\mu_J} - J^*\|_{1,\eta}
\leq
\frac{1}{1-\alpha}
\left(
\nu(\eta,J)^\top(J^* - J) 
+ \frac{2}{1-\alpha} \pi^\top_{\mu^*,\nu(\eta,J)} (J - TJ)^+
\right).
\]
\end{theorem}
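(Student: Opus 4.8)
The plan is to reduce the claim to two more elementary facts: a standard bound expressing the performance loss $J_{\mu_J}-J^*$ of a greedy policy in terms of the Bellman residual $J-TJ$, and then an application of Lemma~\ref{le:feas} (or rather its proof idea) to control that residual term. I would begin from the well-known inequality for greedy policies: since $T_{\mu_J}J = TJ \le T J^* + \alpha P_{\mu_J}(J-J^*)$ componentwise (using monotonicity of $T_{\mu_J}$ relative to $T$ is not quite what we want — more precisely, $TJ \le T_{\mu^*}J$ by definition of $T$ as a pointwise min, and $T_{\mu^*}J = J^* + \alpha P_{\mu^*}(J - J^*)$), one obtains a bound on $J - J_{\mu_J}$ and hence on $J_{\mu_J}-J^*$ in terms of $(J-TJ)^+$ premultiplied by resolvent-type operators $\Delta_{\mu_J}$ and $\Delta^*$. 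The key identity to exploit is $\nu(\eta,J)^\top = (1-\alpha)\eta^\top \Delta_{\mu_J}$ and $\pi_{\mu^*,\nu}^\top = (1-\alpha)\nu^\top\Delta^*$, so that any occurrence of $\eta^\top \Delta_{\mu_J} \Delta^*(\cdot)$ collapses to $\tfrac{1}{(1-\alpha)^2}\pi^\top_{\mu^*,\nu(\eta,J)}(\cdot)$, which is exactly the shape of the second term in the theorem.

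Concretely, the first step is to write $J_{\mu_J} - J^* = (J_{\mu_J} - J) + (J - J^*)$ and bound each piece. For $J_{\mu_J}-J$: because $T_{\mu_J}J = TJ$, we have $J - T_{\mu_J}J = J - TJ \le (J-TJ)^+$, and applying $\Delta_{\mu_J} = (I-\alpha P_{\mu_J})^{-1}$ (a monotone operator) gives $J - J_{\mu_J} = \Delta_{\mu_J}(J - T_{\mu_J}J) \le \Delta_{\mu_J}(J-TJ)^+$, i.e. $J_{\mu_J} \ge J - \Delta_{\mu_J}(J-TJ)^+$. For $J - J^*$: since $J \le T_{\mu^*}J + (TJ - T_{\mu^*}J)^{-}\cdots$ — cleaner is to note $TJ \le T_{\mu^*}J$, hence $J - T_{\mu^*}J \le J - TJ \le (J-TJ)^+$, and Lemma~\ref{le:feas} (with $s = (J-TJ)^+$) yields $J \le J^* + \Delta^*(J-TJ)^+$, i.e. $J - J^* \le \Delta^*(J-TJ)^+$. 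Combining, $J_{\mu_J} - J^* \le (J - J^*) + (J_{\mu_J}-J) \ge$ — here I must be careful with signs: I want an \emph{upper} bound on $\|J_{\mu_J}-J^*\|_{1,\eta} = \eta^\top(J_{\mu_J}-J^*)$ since $J_{\mu_J}\ge J^*$. So I bound $\eta^\top(J_{\mu_J}-J^*) = \eta^\top(J_{\mu_J}-J) + \eta^\top(J-J^*)$, where the second summand is $\le \eta^\top\Delta^*(J-TJ)^+ = \tfrac{1}{1-\alpha}\pi^\top_{\mu^*,\eta}(J-TJ)^+$, and I need a lower bound argument is not needed — rather I should bound $\eta^\top(J_{\mu_J}-J)$ above. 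Since $J_{\mu_J} = J - \Delta_{\mu_J}(J-T_{\mu_J}J)$ exactly (not an inequality!), and $J - T_{\mu_J}J = J-TJ \le (J-TJ)^+$, monotonicity of $\Delta_{\mu_J}$ gives $J_{\mu_J} - J = -\Delta_{\mu_J}(J-TJ) \le \Delta_{\mu_J}(J-TJ)^- \le \Delta_{\mu_J}|J-TJ|$; but to get the clean form I instead write $J_{\mu_J}-J = -\Delta_{\mu_J}(J-TJ)$ and split $J-TJ = (J-TJ)^+ - (J-TJ)^-$, so $J_{\mu_J} - J = \Delta_{\mu_J}(J-TJ)^- - \Delta_{\mu_J}(J-TJ)^+ \le \Delta_{\mu_J}(J-TJ)^-$. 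This needs relating $(J-TJ)^-$ back to the stated quantities, which suggests the right decomposition is actually $J^* - J \le \Delta_{\mu_J}(TJ - J)$ — I would re-derive via $J^* = J_{\mu^*} \le J_{\mu_J}$ always, and $J_{\mu_J} - J^*$: apply $\Delta_{\mu_J}$ to $T_{\mu_J}J^* - T_{\mu_J}J_{\mu_J}$, i.e. $J_{\mu_J} - J^* = \Delta_{\mu_J}(TJ^* - \alpha\cdots)$. The cleanest route: $J_{\mu_J} - J^* = \Delta_{\mu_J}(T_{\mu_J}J^* - J^*)$ — no. I would instead use the textbook bound $J_{\mu_J}-J^* \le \Delta_{\mu_J}(TJ - J)$ restricted to where $TJ\le J$, giving $J_{\mu_J}-J^* \le \Delta_{\mu_J}(J-TJ)^-$; then bound $(J-TJ)^- \le |J-TJ|$ and feed through.

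Given the sign bookkeeping above, the honest summary is: combine $J_{\mu_J}-J^* \le \Delta_{\mu_J}\big((J^*-J)^+ + \text{(residual terms)}\big)$ with $J - J^* \le \Delta^*(J-TJ)^+$, take $\eta^\top(\cdot)$, and use the resolvent/distribution identities to collapse $\eta^\top\Delta_{\mu_J}$ into $\tfrac{1}{1-\alpha}\nu(\eta,J)^\top$ and then a further $\nu(\eta,J)^\top\Delta^*$ into $\tfrac{1}{1-\alpha}\pi^\top_{\mu^*,\nu(\eta,J)}$, landing on $\tfrac{1}{1-\alpha}\big(\nu(\eta,J)^\top(J^*-J) + \tfrac{2}{1-\alpha}\pi^\top_{\mu^*,\nu(\eta,J)}(J-TJ)^+\big)$. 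I expect the main obstacle to be precisely this sign-and-splitting step: controlling $J_{\mu_J}-J^*$ cleanly requires writing $J - J^* = (J-TJ) + (TJ - TJ^*) + (TJ^* - J^*)$, bounding $TJ - TJ^* \le \alpha P_{\mu^*}(J - J^*)$ via the greedy/min structure, and carefully tracking which parts become $(J-TJ)^+$ versus get absorbed into $(J^* - J)$ after applying $\Delta^*$. Once the operator inequality $J_{\mu_J} - J^* \le \Delta_{\mu_J}(J - TJ)^+ + \Delta_{\mu_J}(J^*-J)^{+}\cdots$ is pinned down with the factor $2$ appearing from applying Lemma~\ref{le:feas}'s bound twice (once for $J-J^*$, once inside $J_{\mu_J}-J$), the remainder is the routine resolvent-identity manipulation, which I would not spell out.
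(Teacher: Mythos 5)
Your overall skeleton matches the paper's: decompose $\eta^\top(J_{\mu_J}-J^*)=\eta^\top(J_{\mu_J}-J)+\eta^\top(J-J^*)$, invoke Lemma~\ref{le:feas} with $s=(J-TJ)^+$, and collapse resolvents via $\nu(\eta,J)^\top=(1-\alpha)\eta^\top\Delta_{\mu_J}$ and $\pi^\top_{\mu^*,\nu(\eta,J)}=(1-\alpha)\nu(\eta,J)^\top\Delta^*$. But the step you yourself flag as the ``main obstacle'' is exactly the step you never supply, and it is the heart of the proof. Your manipulations of $J_{\mu_J}-J=\Delta_{\mu_J}(TJ-J)$ only produce bounds through $(J-TJ)^-$ or $|J-TJ|$, quantities that do not appear in the statement and cannot be converted into the stated right-hand side. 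The missing idea is to bound $TJ-J$ itself using the conclusion of Lemma~\ref{le:feas}: since $J\le J^*+\Delta^*s$, applying the monotone operator $T_{\mu^*}$ gives $TJ\le T_{\mu^*}J\le J^*+\alpha P_{\mu^*}\Delta^*s=J^*+\Delta^*s-s\le J^*+\Delta^*s$, hence $TJ-J\le J^*-J+\Delta^*s$ and therefore $\eta^\top(J_{\mu_J}-J)\le\frac{1}{1-\alpha}\nu(\eta,J)^\top(J^*-J+\Delta^*s)$. This is where the $J^*-J$ term and one of the two $\Delta^*s$ contributions come from; your guess that the factor $2$ arises from ``using Lemma~\ref{le:feas} twice'' is the right intuition, but without this inequality the first summand is simply not controlled, and none of the alternative decompositions you float (splitting off $(J-TJ)^-$, or $J-J^*=(J-TJ)+(TJ-TJ^*)+(TJ^*-J^*)$) is carried to a conclusion.

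There is also a secondary slip in the piece you do compute: you bound $\eta^\top(J-J^*)\le\eta^\top\Delta^*(J-TJ)^+=\frac{1}{1-\alpha}\pi^\top_{\mu^*,\eta}(J-TJ)^+$, which involves $\pi_{\mu^*,\eta}$ rather than the $\pi_{\mu^*,\nu(\eta,J)}$ appearing in the theorem. To reach the stated form you need the additional inequality $\eta^\top\Delta^*s\le\eta^\top\Delta_{\mu_J}\Delta^*s$, valid because $\Delta^*s\ge0$ and $\Delta_{\mu_J}=I+\sum_{k\ge1}\alpha^kP_{\mu_J}^k$; only after inserting $\Delta_{\mu_J}$ does the collapse you describe at the outset yield $\frac{1}{(1-\alpha)^2}\pi^\top_{\mu^*,\nu(\eta,J)}s$. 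As written, the proposal identifies the right destination and the right ingredients but leaves both the key inequality for $J_{\mu_J}-J$ and the insertion of $\Delta_{\mu_J}$ unestablished, so it does not amount to a proof.
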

Theorem~\ref{th:performance_bound} indicates that if $J$ is close to $J^*$, so
that $(J - TJ)^+$ is also small, then the expected cost incurred in using a
control policy that is greedy with respect to $J$ will be close to
optimal. The bound indicates the impact of approximation errors over differing
parts of the state space on performance loss.

Suppose that $(r_{\text{SALP}},\bar{s})$ is an optimal solution to the SALP
\eqref{eq:salp4}. Then, examining the proof of Theorem~\ref{th:smooth_bound}
and, in particular, \eqref{eq:approx-bound}, reveals that
\begin{equation}\label{eq:approx-bound2}
\begin{split}
\lefteqn{
\nu^\top(J^* - \Phi r_{\text{SALP}}) 
+ \frac{2}{1-\alpha} \pi^\top_{\mu^*,\nu}
\bar{s}
}
\\
& \leq
\inf_{r,\psi \in \Psi}\ 
\| J^* - \Phi r\|_{\infty, \1/\psi}
\left(
\nu^\top \psi
+
\frac
{2(\pi_{\mu^*,\nu}^\top \psi + 1)(\alpha \beta(\psi) + 1)}
{1 - \alpha}
\right).
\end{split}
\end{equation}
Assume that the state relevance weights $\nu$ in the SALP \eqref{eq:salp4}
satisfy 
\begin{equation}\label{eq:nufixed}
  \nu = \nu(\eta,\Phi r_{\text{SALP}}).
\end{equation}
Then, combining Theorem~\ref{th:smooth_bound}  and
\eqref{eq:approx-bound2} yields
\begin{equation}\label{eq:approx-bound3}
\|J_{\mu_{\Phi r_{\text{SALP}}}} - J^*\|_{1,\eta}
\leq
\frac{1}{1-\alpha}\left(
\inf_{r,\psi \in \Psi}\ 
\| J^* - \Phi r\|_{\infty, \1/\psi}
\left(
\nu^\top \psi
+
\frac
{2(\pi_{\mu^*,\nu}^\top \psi + 1)(\alpha \beta(\psi) + 1)}
{1 - \alpha}
\right)\right).
\end{equation}
This bound \emph{directly} relates the performance loss of the SALP policy to
the ability of the basis function architecture $\Phi$ to approximate
$J^*$. Moreover, this relationship allows us to loosely interpret the SALP as
minimizing an upper bound on performance loss. 

Unfortunately, it is not clear how to make an a-priori choice of the state
relevance weights $\nu$ to satisfy \eqref{eq:nufixed}, since the choice of
$\nu$ determines the solution to the SALP $ r_{\text{SALP}}$; this is
essentially the situation one faces in performance analyses for approximate
dynamic programming algorithms such as approximate value iteration and
temporal difference learning \citep{deFarias00}. Indeed, it is not clear
that there exists a $\nu$ that solves the fixed point equation
\eqref{eq:nufixed}. On the other hand, given a choice of $\nu$ so that $\nu
\approx \nu(\eta,\Phi r_{\text{SALP}})$, in the sense of a small Radon-Nikodym
derivative between the two distributions, an approximate version of the
performance bound \eqref{eq:approx-bound3} will hold. As suggested by
\cite{deFarias1} in the ALP case, one possibility for finding such a choice of
state relevance weights is to iteratively resolve the SALP, and at each time
using the policy from the prior iteration to generate state relevance weights
for the next iteration.



\begin{proof}[\normalfont\sffamily\bfseries
  Proof of Theorem~\ref{th:performance_bound}]
  Define $s \defeq (J - TJ)^+$. From Lemma~\ref{le:feas}, we know that
\[
J \leq J^* + \Delta^*s.
\]
Applying $T_{\mu^*}$ to both sides,
\[
T_{\mu^*} J
\leq
J^* + \alpha P_{\mu^*} \Delta^*s 
 =
J^* + \Delta^*s - s 
\leq
J^* + \Delta^*s,
\]
so that
\begin{equation}
\label{eq:perf_fact1}
TJ \leq T_{\mu^*}J \leq J^* + \Delta^*s.
\end{equation}
Then,
\begin{equation}
\label{eq:perf_fact2}
\begin{split}
\eta^\top(J_{\mu_J} - J)
&
=
\eta^\top \sum_{k=0}^\infty \alpha^k P_{\mu_J}^k (g_\mu + \alpha P_{\mu_J}J - J)
\\
&
=
\eta^\top \Delta_{\mu_J}(TJ - J)
\\
&
\leq
\eta^\top\Delta_{\mu_J}(J^* - J + \Delta^*s)
\\
&
=
\frac{1}{1-\alpha}\nu(\eta,J)^\top(J^* - J + \Delta^*s).
\end{split}
\end{equation}
where the second equality is from the fact that $g_\mu + \alpha P_{\mu_J}J = T_{\mu_J}J = TJ$, and the inequality follows from \eqref{eq:perf_fact1}.

Further,
\begin{equation}
\label{eq:perf_fact3}
\begin{split}
\eta^\top(J - J^*)
&
\leq
\eta^\top \Delta^*s
\\
&
\leq
\eta^\top \Delta_{\mu_J}\Delta^*s
\\
&
=
\frac{1}{1-\alpha}\nu(\eta,J)^\top \Delta^*s.
\end{split}
\end{equation}
where the second inequality follows from the fact that $\Delta^*s \geq 0$ and $\Delta_{\mu_J} = I + \sum_{k=1}^\infty \alpha^k P_{\mu_J}^k$. 

It follows from \eqref{eq:perf_fact2} and \eqref{eq:perf_fact3} that
\[
\begin{split}
\eta^\top(J_{\mu_J} - J^*)
&=
\eta^\top(J_{\mu_J} - J)  + \eta^\top(J - J^*)
\\
&
\leq
\frac{1}{1-\alpha}\nu(\eta,J)^\top(J^* - J + 2 \Delta^*s)
\\
&
=
\frac{1}{1-\alpha}
\left(
\nu(\eta,J)^\top(J^* - J) + \frac{2}{1-\alpha} \pi^\top_{\mu^*,\nu(\eta,J)}s
\right),
\end{split}
\]
which is the result.
\end{proof}

\subsection{Sample Complexity}\label{sec:sampling}

Our analysis thus far has assumed we have the ability to solve the SALP. The
number of constraints and variables in the SALP is grows linearly with
the size of the state space $\Xscr$. Hence, this program will typically be
intractable for problems of interest. One solution, which we describe here, is
to consider a \emph{sampled} variation of the SALP, where states and
constraints are sampled rather than exhaustively considered. In this section,
we will argue that the solution to the SALP is well approximated by the
solution to a tractable, sampled variation.

In particular, let $\hat{\Xscr}$ be a collection of $S$ states drawn
independently from the state space $\Xscr$ according to the distribution
$\pi_{\mu^*,\nu}$. Consider the following optimization program:
\begin{equation}\label{eq:salpsampled}
  \begin{array}{lll}
    \maximize_{r,s} & \displaystyle 
    \nu^{\top} \Phi r - \frac{2}{(1-\alpha) S} \sum_{x\in\hat{\Xscr}} s(x)
    \vspace{3pt}
    \\
    \subjectto & \Phi r (x) \leq T\Phi r (x) + s(x), & \forall\ x\in\hat{\Xscr},
    \\
    & s \geq 0,\quad r\in\Nscr.
  \end{array}
\end{equation}
Here, $\Nscr \subset \R^K$ is a bounding set that restricts the magnitude of
the sampled SALP solution, we will discuss the role of $\Nscr$ shortly. Notice
that \eqref{eq:salpsampled} is a variation of \eqref{eq:salp4}, where only the
decision variables and constraints corresponding to the sampled subset of
states are retained. The resulting optimization program has $K+S$ decision
variables and $S|\Ascr|$ linear constraints. For a moderate number of samples
$S$, this is easily solved.  Even in scenarios where the size of the action
space $\Ascr$ is large, it is frequently possible to rewrite
\eqref{eq:salpsampled} as a compact linear program \citep{FariasVanRoyNetRM,
  Moallemi08}. The natural question, however, is whether the solution to the
sampled SALP \eqref{eq:salpsampled} is a good approximation to the solution
provided by the SALP \eqref{eq:salp4}, for a `tractable' number of samples
$S$.

Here, we answer this question in the affirmative.  We will provide a sample
complexity bound that indicates that for a number of samples $S$ that scales
linearly with the dimension of $\Phi$, $K$, and that need not depend on the
size of the state space, the solution to the sampled SALP satisfies, with high
probability, the approximation guarantee presented for the SALP solution in
Theorem~\ref{th:smooth_bound}. 

Our proof will rely on the following lemma, which provides a Chernoff bound
for the \emph{uniform} convergence of a certain class of functions. The proof
of this lemma, which is based on bounding the pseudo-dimension of the
class of functions, can be found in Appendix~\ref{sec:sampling-proof}.
\begin{lemma} \label{le:sample_complexity} 
  Given a constant $B > 0$, define
  the function $\zeta\colon \R \rightarrow [0,B]$ by
  \[
  \zeta(t) \defeq \max\left(\min(t,B),0\right).
  \]
  Consider a pair of random variables $(Y,Z) \in\R^K\times\R$. For each
  $i=1,\ldots,n$, let the pair $\big(Y^{(i)},Z^{(i)}\big)$ be an i.i.d. sample
  drawn according to the distribution of $(Y,Z)$. Then, for all $\epsilon \in
  (0,B]$,
  \begin{multline*}
    \PR\left(
      \sup_{r\in\R^K}\  \left|
        \frac{1}{n} \sum_{i=1}^n \zeta\left(r^\top Y^{(i)} + Z^{(i)}\right)
        - \E\left[ \zeta\left(r^\top Y + Z\right) \right]
      \right|
      > \epsilon
    \right)
    \\
    \leq
    8\left(\frac{32eB}{\epsilon} \log \frac{32eB}{\epsilon}  \right)^{K+2} 
    \exp\left( - \frac{\epsilon^2n}{64B^2} \right).
  \end{multline*}
  Moreover, given $\delta \in (0,1)$, if
  \[
  n \geq \frac{64B^2}{\epsilon^2}\left(
    2(K+2)\log \frac{16eB}{\epsilon}+\log\frac{8}{\delta} 
  \right),
  \]
  then this probability is at most $\delta$.
\end{lemma}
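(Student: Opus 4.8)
The plan is to read the left-hand probability as a uniform Glivenko--Cantelli deviation over the function class
\[
\mathcal{F} \defeq \bigl\{\, (y,z)\mapsto \zeta\!\left(r^\top y + z\right)\ :\ r\in\R^K \,\bigr\},
\]
every member of which takes values in $[0,B]$, and to control that deviation by (i) bounding the pseudo-dimension of $\mathcal F$ and (ii) invoking a classical covering-number / uniform-deviation inequality for bounded-range function classes of finite pseudo-dimension (of the Pollard--Haussler type). The parameterization by $r\in\R^K$, the boundedness of the range, and the monotonicity of $\zeta$ are precisely the ingredients that make this program go through.

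To bound $\mathrm{Pdim}(\mathcal F)$, first note that the affine family $\{(y,z)\mapsto r^\top y + z : r\in\R^K\}$ is contained in the vector space of real functions on $\R^K\times\R$ spanned by the $K+1$ coordinate maps $y_1,\dots,y_K,z$; since the pseudo-dimension of any subclass of a $d$-dimensional vector space of real-valued functions is at most $d$, this family has pseudo-dimension at most $K+1$. The clip $\zeta(t)=\max(\min(t,B),0)$ is nondecreasing, so composing this family on the outside with $\zeta$ does not increase the pseudo-dimension: for any threshold $t\in(0,B)$ one has $\zeta(u)\ge t \iff u\ge t$, so a tuple of points pseudo-shattered by $\mathcal F$ (necessarily with thresholds in $(0,B)$) is already pseudo-shattered by the affine family. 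Hence $\mathrm{Pdim}(\mathcal F)\le K+1$.

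With the pseudo-dimension in hand, I would invoke the standard deviation estimate: for $n$ i.i.d.\ samples and a class of $[0,B]$-valued functions of pseudo-dimension $d$, the probability that some member has empirical mean deviating from its expectation by more than $\epsilon$ is at most $8\bigl(\tfrac{c_1B}{\epsilon}\log\tfrac{c_1B}{\epsilon}\bigr)^{d+1}\exp\!\bigl(-\epsilon^2 n/(c_2 B^2)\bigr)$ for absolute constants $c_1,c_2$. This follows from the usual chain of symmetrization, a Haussler-type $L_1$ covering-number bound expressed through the pseudo-dimension (which absorbs the polynomial-in-$d$ prefactor into one extra log-power, hence the exponent $d+1$), and Hoeffding's inequality with a union bound over an $\epsilon$-net (see, e.g., Pollard or Anthony--Bartlett). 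Since the base $\tfrac{32eB}{\epsilon}\log\tfrac{32eB}{\epsilon}$ exceeds $1$, the bound is monotone in the exponent, so substituting $d+1\le K+2$ and the appropriate constants ($c_1=32$, $c_2=64$) yields the first displayed inequality in the statement.

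Finally, the ``moreover'' clause is elementary. Requiring the Chernoff bound to be at most $\delta$ and taking logarithms, it suffices that
\[
n \ \ge\ \frac{64B^2}{\epsilon^2}\left(\log\frac{8}{\delta} + (K+2)\log\!\Bigl(\tfrac{32eB}{\epsilon}\log\tfrac{32eB}{\epsilon}\Bigr)\right),
\]
and since $\epsilon\le B$ forces $\tfrac{32eB}{\epsilon}\ge 32e$, one has $\log\tfrac{32eB}{\epsilon}\le \tfrac{8eB}{\epsilon}$, hence $\tfrac{32eB}{\epsilon}\log\tfrac{32eB}{\epsilon}\le\bigl(\tfrac{16eB}{\epsilon}\bigr)^2$, so the displayed quantity is no larger than the stated sufficient sample size. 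I expect the only genuine work to be the pseudo-dimension bookkeeping and the careful chasing of absolute constants through the covering-number-to-deviation reduction so that they land exactly as written; the probabilistic content itself is routine once $\mathrm{Pdim}(\mathcal F)$ is pinned down.
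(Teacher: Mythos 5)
Your proposal is correct and shares the paper's overall skeleton---bound the pseudo-dimension of the clipped-affine class $\{(y,z)\mapsto\zeta(r^\top y+z)\}$ and then invoke a Haussler/Pollard-type uniform deviation bound (the paper cites Corollary~2 of Haussler, 1992, Section~4, which is exactly the ``standard estimate'' you sketch via symmetrization, $L_1$ covering numbers, and Hoeffding)---but it proves the key pseudo-dimension step by a genuinely different argument. The paper shows $\dim_P(\Fscr)\le K+2$ from scratch: it assumes a shattered set of $K+3$ points, lifts them to $\R^{K+1}$ using the thresholds, and derives a contradiction via Radon's lemma. You instead combine two standard structural facts: the affine family $r^\top y+z$ sits inside the $(K+1)$-dimensional span of the coordinate maps (indeed, being a coset of the $K$-dimensional span of $y_1,\dots,y_K$, its pseudo-dimension is even at most $K$), and post-composition with the nondecreasing clip $\zeta$ cannot increase pseudo-dimension since $\zeta(u)\ge c\iff u\ge c$ for the relevant thresholds (a shattered set forces thresholds in $(0,B]$, not just $(0,B)$ as you wrote, but the equivalence holds at $c=B$ too, so this is harmless). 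Your route is more modular, leans on citable lemmas rather than a bespoke geometric argument, and gives the slightly sharper bound $K+1$, which suffices because the base of the polynomial factor exceeds one so the stated exponent $K+2$ only loosens the bound; the paper's route is self-contained and needs no composition or linear-dimension lemmas. Your verification of the ``moreover'' clause---using $\epsilon\le B$ to get $\log\frac{32eB}{\epsilon}\le\frac{8eB}{\epsilon}$ and hence $\frac{32eB}{\epsilon}\log\frac{32eB}{\epsilon}\le\bigl(\frac{16eB}{\epsilon}\bigr)^2$---is correct and is a step the paper leaves implicit; the only soft spot is that you assert rather than verify that the absolute constants $32$ and $64$ emerge from the covering-number chain, but the paper likewise delegates this to Haussler's corollary.
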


In order to establish a sample complexity result, we require control over the
magnitude of optimal solutions to the SALP \eqref{eq:salp4}. This control is
provided by the bounding set $\Nscr$. In particular, we will assume that
$\Nscr$ is large enough so that it contains an optimal solution to the SALP
\eqref{eq:salp4}, and we define the constant
\begin{equation}\label{eq:B-def}
B \defeq \sup_{r\in\Nscr}\ \|(\Phi r - T \Phi r)^+\|_{\infty}.
\end{equation}
This quantity is closely related to the diameter of the region $\Nscr$. Our
main sample complexity result can then be stated as follows:
\begin{theorem}\label{th:sample_complexity_RSALP}
  Under the conditions of Theorem~\ref{th:smooth_bound}, let $r_{\text{SALP}}$
  be an optimal solution to the SALP \eqref{eq:salp4}, and let
  $\hat{r}_{\text{SALP}}$ be an optimal solution to the sampled SALP
  \eqref{eq:salpsampled}. Assume that $r_{\text{SALP}}\in\Nscr$. Further,
  given $\epsilon \in (0,B]$ and $\delta \in (0,1/2]$, suppose that the number
  of sampled states $S$ satisfies
  \[
  S \geq \frac{64B^2}{\epsilon^2}\left(2(K+2)\log \frac{16eB}{\epsilon}
    +\log\frac{8}{\delta} \right).
  \]
  Then, with probability at least $1-\delta-2^{-383} \delta^{128}$,
  \[
  \|J^* - \Phi \hat{r}_{\text{SALP}}\|_{1,\nu}
  \leq
  \inf_{\substack{r\in\Nscr \\ \psi \in \Psi}}\ 
  \| J^* - \Phi r\|_{\infty, \1/\psi}
  \left(
    \nu^\top \psi
    +
    \frac
    {2(\pi_{\mu^*,\nu}^\top \psi + 1)(\alpha \beta(\psi) + 1)}
    {1 - \alpha}
  \right)
  +
  \frac{4\epsilon}{1-\alpha}.
  \]
\end{theorem}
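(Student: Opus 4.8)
The plan is to view the SALP \eqref{eq:salp4} and the sampled SALP \eqref{eq:salpsampled} as maximizing, over the weight vector $r$, a population objective and its empirical version, and then to transfer the guarantee of Theorem~\ref{th:smooth_bound} from one to the other through a uniform law of large numbers. Write
\[
V(r) \defeq \nu^\top \Phi r - \frac{2}{1-\alpha}\,\pi_{\mu^*,\nu}^\top (\Phi r - T\Phi r)^+,
\qquad
\hat V(r) \defeq \nu^\top \Phi r - \frac{2}{(1-\alpha)S}\sum_{x\in\hat\Xscr} (\Phi r(x) - T\Phi r(x))^+ .
\]
For a fixed $r$ the optimal slack in either program is $s(x)=(\Phi r(x)-T\Phi r(x))^+$ on the states that appear, so $r_{\text{SALP}}$ maximizes $V$ over $\R^K$, while $\hat{r}_{\text{SALP}}$ maximizes $\hat V$ over $\Nscr$; note that $\hat{r}_{\text{SALP}}\in\Nscr$ by construction and $r_{\text{SALP}}\in\Nscr$ by hypothesis. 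Exactly as in the first lines of the proof of Theorem~\ref{th:bound1}, applying Lemma~\ref{le:feas} with $J=\Phi r$ and $s=(\Phi r - T\Phi r)^+$ gives, for every $r$, $\|J^*-\Phi r\|_{1,\nu}\le \nu^\top J^* - V(r)$; and \eqref{eq:approx-bound2}, inside the proof of Theorem~\ref{th:smooth_bound}, shows that $\nu^\top J^* - V(r_{\text{SALP}})$ is at most the infimum on the right-hand side of that theorem. It therefore suffices to show that $V(\hat{r}_{\text{SALP}}) \ge V(r_{\text{SALP}}) - \tfrac{4\epsilon}{1-\alpha}$ with the claimed probability; replacing $\inf_{r\in\R^K}$ by $\inf_{r\in\Nscr}$ in the statement only enlarges the bound and so is harmless.

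The core estimate is the two-sided uniform deviation bound
\[
\sup_{r\in\Nscr}\,\left|\,\pi_{\mu^*,\nu}^\top (\Phi r - T\Phi r)^+ - \frac{1}{S}\sum_{x\in\hat\Xscr} (\Phi r(x) - T\Phi r(x))^+ \,\right| \;\le\; \epsilon ,
\]
which, once available, yields $|V(r)-\hat V(r)|\le \tfrac{2\epsilon}{1-\alpha}$ for all $r\in\Nscr$ simultaneously, and then
\[
V(\hat{r}_{\text{SALP}}) \;\ge\; \hat V(\hat{r}_{\text{SALP}}) - \frac{2\epsilon}{1-\alpha} \;\ge\; \hat V(r_{\text{SALP}}) - \frac{2\epsilon}{1-\alpha} \;\ge\; V(r_{\text{SALP}}) - \frac{4\epsilon}{1-\alpha},
\]
where the middle inequality uses optimality of $\hat{r}_{\text{SALP}}$ for $\hat V$ over $\Nscr$ together with $r_{\text{SALP}}\in\Nscr$, and the two outer inequalities apply the uniform bound at the points $\hat{r}_{\text{SALP}}$ and $r_{\text{SALP}}$, both of which lie in $\Nscr$. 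This is precisely the reduction carried out in the previous paragraph.

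Obtaining the uniform bound is where Lemma~\ref{le:sample_complexity} enters: take the i.i.d.\ draws to be the sampled states $x\sim\pi_{\mu^*,\nu}$, let $B$ be as in \eqref{eq:B-def} so that $0\le(\Phi r - T\Phi r)^+(x)\le B$ for every $r\in\Nscr$ and the clipping $\zeta(\cdot)$ with bound $B$ acts as the identity on the functions of interest, and set $n=S$; the lower bound imposed on $S$ is exactly the threshold in Lemma~\ref{le:sample_complexity} that forces the deviation probability below $\delta$, which also explains why $S$ scales with $(B/\epsilon)^2$ and $K+\log(1/\delta)$ (up to logarithmic factors) and not with $|\Xscr|$, and accounts for the leading $1-\delta$. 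The one genuine difficulty --- and the step I expect to be the main obstacle --- is that $(\Phi r(x) - T\Phi r(x))^+$ is not literally of the clipped-affine form $\zeta(r^\top Y + Z)$ handled by the lemma: since the Bellman operator $T$ contains a minimization over actions, $\Phi r(x) - T\Phi r(x)$ is a pointwise maximum of $|\Ascr|$ affine functions of $r$ rather than a single affine function. One must therefore either extend the pseudo-dimension estimate underlying Lemma~\ref{le:sample_complexity} to hinge losses of finite maxima of affine functions, or bound the relevant function class through its $|\Ascr|$ linear constituents; I would carry this out inside the pseudo-dimension argument. Finally, closing the chain requires one further Chernoff-type tail estimate, invoked at the sample size already guaranteed by the lower bound on $S$ (using $\epsilon\in(0,B]$ and $\delta\le 1/2$), and this is what produces the small residual failure probability $2^{-383}\delta^{128}$, negligible next to $\delta$.
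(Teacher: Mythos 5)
Your reduction (bounding $\|J^*-\Phi r\|_{1,\nu}\le \nu^\top J^*-V(r)$ via Lemma~\ref{le:feas}, then arguing $V(\hat r_{\text{SALP}})\ge V(r_{\text{SALP}})-\tfrac{4\epsilon}{1-\alpha}$ by an empirical-risk argument) is sound in outline, but the step you flag as ``the main obstacle'' is a genuine gap, and it is precisely the step the paper is engineered to avoid. Your plan needs a uniform deviation bound over the class $\{(\Phi r-T\Phi r)^+ : r\in\Nscr\}$, whose members are clipped \emph{maxima of $|\Ascr|$ affine functions} of $r$; Lemma~\ref{le:sample_complexity} does not apply to this class, and the repair you sketch (extending the pseudo-dimension estimate to hinges of finite maxima, or unioning over the $|\Ascr|$ affine constituents) would inflate the pseudo-dimension or the failure probability by terms depending on $|\Ascr|$, so the theorem's stated sample size $S\ge \tfrac{64B^2}{\epsilon^2}\bigl(2(K+2)\log\tfrac{16eB}{\epsilon}+\log\tfrac{8}{\delta}\bigr)$, which is independent of $|\Ascr|$, would not be recovered. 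In other words, the proposal as written proves (at best) a weaker statement with a different sample complexity, unless the missing uniform law is established for the max-of-affine class with the same constants --- which is exactly the nontrivial part left open.

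The paper sidesteps this by never requiring uniform convergence over that class. It applies Lemma~\ref{le:sample_complexity} only to the \emph{fixed-policy} Bellman error: for $Y(x)=\Phi(x)^\top-\alpha P_{\mu^*}\Phi(x)^\top$ and $Z(x)=-g(x,\mu^*(x))$ one has $(\Phi r(x)-T_{\mu^*}\Phi r(x))^+=\zeta(r^\top Y(x)+Z(x))$, a clipped affine function of $r$, so the pseudo-dimension bound $K+2$ applies verbatim. The bridge back to the sampled objective uses monotonicity, $T\Phi r\le T_{\mu^*}\Phi r$, hence $\hat s_{\mu^*}\le\hat s$, so the empirical fixed-policy slack is dominated by the empirical slack appearing in \eqref{eq:salpsampled}; optimality of $\hat r_{\text{SALP}}$ then swaps in $(r_{\text{SALP}},\bar s)$. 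The remaining comparison $\hat\pi_{\mu^*,\nu}^\top\bar s$ versus $\pi_{\mu^*,\nu}^\top\bar s$ involves only the single, deterministic function $\bar s$ (bounded by $B$ from \eqref{eq:B-def}), so a plain Hoeffding bound suffices, and this is the sole source of the residual probability $2^{-383}\delta^{128}$ --- not an additional uniform Chernoff estimate as you guessed. So your high-level ERM framing is a legitimate alternative route in principle, but to make it work at the stated sample size you would have to either prove the uniform law for the max-of-affine hinge class with $|\Ascr|$-free constants or adopt the paper's device of passing through $T_{\mu^*}$ and handling the fixed point $r_{\text{SALP}}$ separately.
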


Theorem~\ref{th:sample_complexity_RSALP} establishes that the sampled SALP
\eqref{eq:salpsampled} provides a close approximation to the solution of the
SALP \eqref{eq:salp4}, in the sense that the approximation guarantee we
established for the SALP in Theorem~\ref{th:smooth_bound} is approximately
valid for the solution to the sampled SALP, with high probability. The theorem
precisely specifies the number of samples required to accomplish this
task. This number depends linearly on the number of basis functions and the
diameter of the feasible region, but is otherwise independent of the size of
the state space for the MDP under consideration.

It is worth juxtaposing our sample complexity result with that available for
the ALP \eqref{eq:alp}. Recall that the ALP has a large number of constraints
but a \emph{small} number of variables; the SALP is thus, at least
superficially, a significantly more complex program. Exploiting the fact that
the ALP has a small number of variables, \citet{deFarias2} establish a sample
complexity bound for a sampled version of the ALP analogous to the the sampled
SALP \eqref{eq:salpsampled}. The number of samples required for this sampled
ALP to produce a good approximation to the ALP can be shown to depend on the
same problem parameters we have identified here, viz.: the constant $B$ and
the number of basis functions $K$. The sample complexity in the ALP case is
identical to the sample complexity bound established here, up to constants and
a linear dependence on the ratio $B/\epsilon$. This is as opposed to the
quadratic dependence on $B/\epsilon$ of the sampled SALP. Although the two
sample complexity bounds are within polynomial terms of each other, one may
rightfully worry abut the practical implications of an additional factor of
$B/\epsilon$ in the required number of samples. In the computational study of
Section~\ref{sec:tetris}, we will attempt to address this concern.

Finally, note that the sampled SALP has $K + S$ variables and $S|\Ascr|$
linear constraints whereas the sampled ALP has merely $K$ variables and
$S|\Ascr|$ linear constraints. Nonetheless, we will show in the
Section~\ref{sec:efficient} that the special structure of the Hessian
associated with the sampled SALP affords us a linear computational complexity
dependence on $S$.

\begin{proof}[\normalfont\sffamily\bfseries Proof of
  Theorem~\ref{th:sample_complexity_RSALP}]
  Define the vectors
  \[
  \hat{s}_{\mu^*} \defeq \left(\Phi \hat{r}_{\text{SALP}}
    - T_{\mu^*} \Phi \hat{r}_{\text{SALP}} \right)^{+},
  \quad\text{and}\quad
  \hat{s} \defeq
  \left(\Phi \hat{r}_{\text{SALP}} - T \Phi \hat{r}_{\text{SALP}}
  \right)^{+}.
  \]
  One has, via Lemma~\ref{le:feas}, that
  \[
  \Phi \hat{r}_{\text{SALP}} - J^* \leq \Delta^*\hat{s}_{\mu^*}
  \]
  Thus, as in the last set of inequalities in the proof of
  Theorem~\ref{th:bound1}, we have
\begin{equation}\label{eq:Jhr}
\|J^* - \Phi \hat{r}_{\text{SALP}}\|_{1,\nu}
\leq
\nu^\top(J^* - \Phi \hat{r}_{\text{SALP}}) 
+ \frac{2\pi^\top_{\mu^*,\nu}\hat{s}_{\mu^*}}{1-\alpha}.
\end{equation}

Now, let $\hat{\pi}_{\mu^*,\nu}$ be the empirical measure induced by the
collection of sampled states $\hat{\Xscr}$. Given a state $x\in\Xscr$, define a vector $Y(x)\in\R^K$ and a scalar $Z(x)\in\R$ according to
\[
Y(x) \defeq \Phi(x)^\top - \alpha P_{\mu^*} \Phi(x)^\top,
\quad
Z(x)\defeq - g(x,\mu^*(x)),
\]
so that, for any vector of weights $r\in\Nscr$,
\[
\left(\Phi r(x) - T_{\mu^*}\Phi r(x)\right)^+ = \zeta\left(r^\top Y(x) 
  + Z(x)\right).
\]
Then,
\[
\left| \hat{\pi}^\top_{\mu^*,\nu}\hat{s}_{\mu^*} 
- \pi_{\mu^*,\nu}^\top\hat{s}_{\mu^*}
\right|
\leq 
\sup_{r\in\Nscr}\ 
\left|
\frac{1}{S} \sum_{x\in\hat{\Xscr}} \zeta\left(r^\top Y(x) + Z(x)\right)
-
\sum_{x\in\Xscr}
\pi_{\mu^*,\nu}(x) \zeta\left(r^\top Y(x) + Z(x)\right)
\right|.
\]
Applying Lemma~\ref{le:sample_complexity}, we have that
\begin{equation}
\label{eq:uniform_sample_bound}
\PR\left(
\left| \hat{\pi}^\top_{\mu^*,\nu}\hat{s}_{\mu^*} - \pi_{\mu^*,\nu}^\top\hat{s}_{\mu^*} 
\right| 
> \epsilon
\right)
\leq \delta.
\end{equation}

Next, suppose $(r_{\text{SALP}},\bar{s})$ is an optimal solution to the SALP
\eqref{eq:salp4}. Then, with probability at least $1-\delta$,
\begin{equation}\label{eq:hr1}
\begin{split}
\nu^\top(J^* - \Phi \hat{r}_{\text{SALP}}) 
+ \frac{2\pi^\top_{\mu^*,\nu}\hat{s}_{\mu^*}}{1-\alpha}
& 
\leq
\nu^\top(J^* - \Phi \hat{r}_{\text{SALP}}) 
+ \frac{2\hat{\pi}^\top_{\mu^*,\nu}\hat{s}_{\mu^*}}{1-\alpha}
+ \frac{2 \epsilon}{1-\alpha} 
\\
&
\leq
\nu^\top(J^* - \Phi \hat{r}_{\text{SALP}}) 
+ \frac{2\hat{\pi}^\top_{\mu^*,\nu}\hat{s}}{1-\alpha}
+ \frac{2 \epsilon}{1-\alpha}
\\
&
\leq
\nu^\top(J^* - \Phi r_{\text{SALP}}) 
+ \frac{2\hat{\pi}^\top_{\mu^*,\nu}\bar{s}}{1-\alpha}
+ \frac{2 \epsilon}{1-\alpha},
\end{split}
\end{equation}
where the first inequality follows from \eqref{eq:uniform_sample_bound}, and
the final inequality follows from the optimality of
$(\hat{r}_{\text{SALP}},\hat{s})$ for the sampled SALP \eqref{eq:salpsampled}.

Notice that, without loss of generality, we can assume that $\bar{s}(x)=(\Phi
r_{\text{SALP}}(x) - T \Phi r_{\text{SALP}}(x))^+$, for each
$x\in\Xscr$. Thus, $0 \leq \bar{s}(x) \leq B$. Applying Hoeffding's
inequality,
\[
\PR\left(
\left| \hat{\pi}^\top_{\mu^*,\nu}\bar{s} - \pi_{\mu^*,\nu}^\top\bar{s} 
\right| 
\geq \epsilon
\right)
\leq 2 \exp\left(-\frac{2S\epsilon^2}{B^2}\right) 
< 2^{-383} \delta^{128},
\]
where final inequality follows from our choice of $S$. Combining this with
\eqref{eq:Jhr} and \eqref{eq:hr1}, with probability at least $1-\delta - 2^{-383} \delta^{128}$, we
have
\[
\begin{split}
\|J^* - \Phi \hat{r}_{\text{SALP}}\|_{1,\nu}
&
\leq
\nu^\top(J^* - \Phi r_{\text{SALP}}) 
+ \frac{2\hat{\pi}^\top_{\mu^*,\nu}\bar{s}}{1-\alpha}
+ \frac{2 \epsilon}{1-\alpha}
\\
& \leq
\nu^\top(J^* - \Phi r_{\text{SALP}}) 
+ \frac{2\pi^\top_{\mu^*,\nu}\bar{s}}{1-\alpha}
+ \frac{4 \epsilon}{1-\alpha}.
\end{split}
\]
The result then follows from \eqref{eq:approx-bound}--\eqref{eq:int1} in the
proof of Theorem~\ref{th:smooth_bound}.
\end{proof}

An alternative sample complexity bound of a similar flavor can be developed
using results from the stochastic programming literature. The key idea is that
the SALP \eqref{eq:salp4} can be reformulated as the following convex
stochastic programming problem:
\begin{equation}\label{eq:stoch_prg}
  \maximize_{r \in \Nscr}\ 
\E_{\nu, \pi_{\mu^*,\nu}}\left[\Phi r(x_0)  
- \frac{2}{1-\alpha}(\Phi r(x) -T\Phi r (x))^+\right],
\end{equation}
where $x_0,x\in\Xscr$ have distributions $\nu$ and $\pi_{\mu^*,\nu}$,
respectively. Interpreting the sampled SALP \eqref{eq:salpsampled} as a sample
average approximation of \eqref{eq:stoch_prg}, a sample complexity bound can
be developed using the methodology of \citet[][Chap.~5]{Shapiro09}, for
example.  This proof is simpler than the one presented here, but yields a
cruder estimate that is not as easily compared with those available for the
ALP.


\section{Practical Implementation}\label{sec:prac-imp}

The SALP \eqref{eq:salp}, as it is written, is not directly implementable. As
discussed in Section~\ref{sec:sampling}, the number of variables and
constraints grows linearly with the size of the state space $\Xscr$, making
the optimization problem intractable. Moreover, it is not clear how to choose
parameters such as the probability distributions $\nu$ and $\pi$ or the
violation budget $\theta$. However, the analysis in Section~\ref{sec:analysis}
provides insight that allows us to codify a recipe for a practical and
implementable variation.

Consider the following algorithm:
\begin{enumerate}
\item Sample $S$ states independently from the state space $\Xscr$ according
  to a sampling distribution $\rho$. Denote the set of sampled states by
  $\hat{\Xscr}$.

\item\label{st:line-search} Perform a line search over increasing choices of
  $\theta\geq 0$. For each choice of $\theta$,
\begin{enumerate}
\item\label{st:rlp} Solve the {\em sampled} SALP:
\begin{equation}\label{eq:rlp}
\begin{array}{lll}
\maximize_{r, s} & 
\displaystyle
\frac{1}{S} \sum_{x \in \hat{\Xscr}}(\Phi r)(x) 
\vspace{3pt}
\\
\subjectto 
& \Phi r(x) \leq T\Phi r(x) + s(x), & \forall\ x\in\hat{\Xscr}, 
\\[5pt]
& 
\displaystyle
\frac{1}{S} \sum_{x \in \hat{\Xscr}} s(x) \leq  \theta, 
\vspace{3pt}
\\
& s \geq 0.
\end{array}
\end{equation}

\item Evaluate the performance of the policy resulting from \eqref{eq:rlp} via
  Monte Carlo simulation.
\end{enumerate}

\item Select the best of the policies evaluated in Step~\ref{st:line-search}.
\end{enumerate}
This algorithm takes as inputs the following parameters:
\begin{itemize}
\item $\Phi$, a collection of $K$ basis functions. 
\item $S$, the number of states to sample. By sampling $S$ states, we limit
  the number of variables and constraints in the sampled SALP
  \eqref{eq:rlp}. Thus, by keeping $S$ small, the sampled SALP becomes
  tractable to solve numerically. On the other hand, the quality of the
  approximation provided by the sampled SALP may suffer is $S$ is chosen to be
  too small. The sample complexity theory developed in
  Section~\ref{sec:sampling} suggests that $S$ can be chosen to grow linearly
  with $K$, the size of the basis set. In particular, a reasonable choice of
  $S$ need not depend on the size of the underlying state space.

  In practice, we choose $S \gg K$ to be as large as possible subject to
  limits on the CPU time and memory required to solve \eqref{eq:rlp}. In
  Section~\ref{sec:efficient}, we will discuss how the program \eqref{eq:rlp}
  can be solved efficiently via barrier methods for large choices of $S$.

\item $\rho$, a sampling distribution on the state space $\Xscr$.  The
  distribution $\rho$ is used, via Monte Carlo sampling, in place of both the
  distributions $\nu$ and $\pi$ in the SALP \eqref{eq:salp}.  Recall that the
  bounds in Theorems~\ref{th:bound1} and \ref{th:smooth_bound} provide
  approximation guarantees in a $\nu$-weighted 1-norm. This suggests that
  $\nu$ should be chosen to emphasize regions of the state space where the
  quality of approximation is most important.  Similarly, the theory in
  Section~\ref{sec:analysis} suggests that the distribution $\pi$ should be
  related to the distribution induced by the optimal policy.

  In practice, we choose $\rho$ to be the stationary distribution under a
  baseline policy. States are then sampled from $\rho$ via Monte Carlo
  simulation of the baseline policy. This baseline policy can correspond, for
  example, to a heuristic control policy for the system. More sophisticated
  procedures such as `bootstrapping' can also be considered
  \citep{FariasVanRoyTetris}. Here, one starts with a heuristic policy to be
  used for sampling states. Given the sampled states, the application of our
  algorithm will result in a new control policy. The new control policy can
  then be used for state sampling in a subsequent round of optimization, and
  the process can be repeated.
\end{itemize}

Note that our algorithm does not require an explicit choice of the violation
budget $\theta$, since we optimize with a line search over the choices of
$\theta$. This is motivated by the fact that the sampled SALP \eqref{eq:rlp}
can efficiently resolved for increasing values of $\theta$ via a
`warm-start' procedure. Here, the optimal solution of the sampled SALP given
previous value of $\theta$ is used as a starting point for the solver in a
subsequent round of optimization. Using this method we observe that, in
practice, the total solution time for a series of sampled SALP instances that
vary by their values of $\theta$ grows sub-linearly with the number of
instances.

\subsection{Efficient Linear Programming Solution}\label{sec:efficient}

The sampled SALP \eqref{eq:rlp} can be written explicitly in the form of a
linear program:
\begin{equation}\label{eq:lp-form}
\begin{array}{lll}
\maximize_{r, s} & 
\displaystyle
c^\top r
\\
\subjectto 
& 
\begin{bmatrix}
A_{11} & A_{12} \\
0 & d^\top
\end{bmatrix}
\begin{bmatrix}
r \\ s
\end{bmatrix}
\leq 
b, \\
& s \geq 0.
\end{array}
\end{equation}
Here, $b\in\R^{S|\Ascr|+1}$, $c\in\R^K$, and $d\in\R^S$ are vectors,
$A_{11}\in\R^{S|\Ascr|\times K}$ is a dense matrix, and
$A_{12}\in\R^{S|\Ascr|\times S}$ is a sparse matrix.  This LP has $K+S$
decision variables and $S |\Ascr| +1$ linear constraints.

Typically, the number of sampled states $S$ will be quite large. For example,
in Section~\ref{sec:tetris}, we will discuss an example where $K=22$ and
$S=300{,}000$. The resulting LP has approximately $300{,}000$ variables
and $6{,}600{,}000$ constraints. In such cases, with many variables
\emph{and} many constraints, one might expect the LP to be difficult to
solve. However, the sparsity structure of the constraint matrix in
\eqref{eq:lp-form} and, especially, that of the sub-matrix $A_{12}$, allows
efficient optimization of this LP.

In particular, imagine solving the LP \eqref{eq:lp-form} with a barrier
method. The computational bottleneck of such a method is the inner Newton step
to compute a central point \citep[see, for example,][]{BoydConvex}. This step
involves the solution of a system of linear equations of the form
\begin{equation}\label{eq:newton-update}
H
\begin{bmatrix}
\Delta r \\ \Delta s
\end{bmatrix}
= -g.
\end{equation}
Here, $g\in\R^{K+S}$ is a vector and $H\in\R^{(K+S)\times(K+S)}$ is the
Hessian matrix of the barrier function. Without exploiting the structure of
the matrix $H$, this linear system can be solved with $O((K+S)^3)$ floating
point operations. For large values of $S$, this may be prohibitive. 

Fortunately, the Hessian matrix $H$ can be decomposed according to the block
structure 
\[
H \defeq
\begin{bmatrix}
H_{11} & H_{12} \\
H_{12}^\top & H_{22}
\end{bmatrix},
\]
where $H_{11}\in\R^{K\times K}$, $H_{12}\in\R^{K\times S}$, and
$H_{22}\in\R^{S\times S}$. In the case of the LP \eqref{eq:lp-form}, it is not
difficult to see that the sparsity structure of the sub-matrix $A_{12}$
ensures that the sub-matrix $H_{22}$ takes the form of a diagonal matrix plus
a rank-one matrix. This allows the linear system \eqref{eq:newton-update} to
be solved with $O(K^2 S + K^3)$ floating point operations. This
is linear in $S$, the number of sampled states.


\section{Case Study: Tetris}\label{sec:tetris}

Tetris is a popular video game designed and developed by Alexey Pazhitnov in
1985. The Tetris board, illustrated in Figure~\ref{fig:tetris}, consists of a
two-dimensional grid of 20 rows and 10 columns. The game starts with an empty
grid and pieces fall randomly one after another. Each piece consists of four
blocks and the player can rotate and translate it in the plane before it
touches the `floor'. The pieces come in seven different shapes and the next
piece to fall is chosen from among these with equal probability. Whenever the pieces are placed
such that there is a line of contiguous blocks formed, a point is earned and
the line gets cleared. Once the board has enough blocks such that the incoming
piece cannot be placed for all translation and rotation, the game terminates.
Hence the goal of the player is to clear maximum number of lines before the
board gets full.

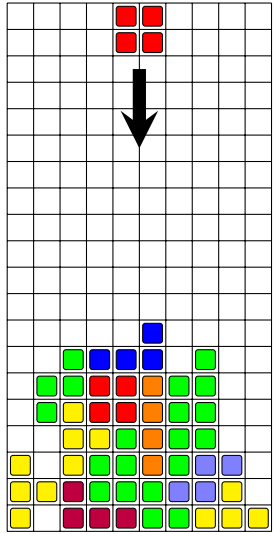
\begin{figure}[htb]
  \centering
  \hidefastcompile{
  \begin{tikzpicture}[x=10pt,y=10pt]
    \draw[shift={(-0.5,-0.5)},step=1] (0,0) grid (10,20);
    \tikzset{fsquare/.style={rectangle,rounded corners=1pt,draw=black,
        minimum size=7.5pt}}

    \foreach \pos in {(0,0),(7,0),(8,0),(9,0),(0,1),(1,1),(8,1),(0,2),(2,2),
      (2,3),(3,3),(2,4)}
    \node[fsquare,fill=yellow] at \pos {};

    \foreach \pos in {(2,0),(3,0),(4,0),(2,1)}
    \node[fsquare,fill=purple] at \pos {};

    \foreach \pos in {(3,4),(4,4),(3,5),(4,5))}
    \node[fsquare,fill=red] at \pos {};

    \foreach \pos in {(5,0),(6,0),(3,1),(4,1),(5,1),(3,2),(4,2),(6,2),(4,3),(6,3),
      (7,3),(1,4),(6,4),(7,4),(1,5),(2,5),(6,5),(7,5),(2,6),(7,6)}
    \node[fsquare,fill=green] at \pos {};

    \foreach \pos in {(6,1),(7,1),(7,2),(8,2)}
    \node[fsquare,fill=blue!50] at \pos {};

    \foreach \pos in {(5,2),(5,3),(5,4),(5,5)}
    \node[fsquare,fill=orange] at \pos {};

    \foreach \pos in {(3,6),(4,6),(5,6),(5,7)}
    \node[fsquare,fill=blue] at \pos {};

    \foreach \pos in {(4,18),(5,18),(4,19),(5,19)}
    \node[fsquare,fill=red] at \pos {};

    \draw[->,black,line width=5pt] (4.5,17) to (4.5,14);
  \end{tikzpicture}
  }
  \caption{Example of a Tetris board configuration\label{fig:tetris}}
\end{figure}

Our interest in Tetris as a case study for the SALP algorithm is motivated by
several facts. First, theoretical results suggest that design of an optimal
Tetris player is a difficult problem.  \citet{Brzustowski1992} and
\citet{Burgiel} have shown that the game of Tetris has to end with probability
one, under all policies. They demonstrate a sequence of pieces, which leads to
termination state of game for all possible actions.  \citet{Demaine} consider
the offline version of Tetris and provide computational complexity
results for `optimally' playing Tetris. They show that when the sequence of
pieces is known beforehand it is NP-complete to maximize the number of cleared
rows, minimize the maximum height of an occupied square, or maximize the
number of pieces placed before the game ends. This suggests that the online
version should be computationally difficult.

Second, Tetris represents precisely the kind of large and unstructured MDP for
which it is difficult to design heuristic controllers, and hence policies
designed by ADP algorithms are particularly relevant. Moreover, Tetris has
been employed by a number of researchers as a testbed problem. One of the
important steps in applying these techniques is the choice of basis
functions. Fortunately, there is a \emph{fixed set of basis functions}, to be
described shortly, which have been used by researchers while applying
temporal-difference learning \citep{BertsIoffe,BertsekasNDP}, policy gradient
methods \citep{Kakade}, and approximate linear programming
\citep{FariasVanRoyTetris}. Hence, application of SALP to Tetris allows us to
make a clear comparison to other ADP methods.



The SALP methodology described in
Section~\ref{sec:prac-imp} was applied as follows:

\begin{itemize}
\item {\bf MDP formulation.} We used the formulation of Tetris as a Markov
  decision problem of \citet{FariasVanRoyTetris}. Here, the `state' at a
  particular time encodes the current board configuration and the shape of the
  next falling piece, while the `action' determines the placement of the
  falling piece.

\item {\bf Basis functions.} We employed the 22 basis functions originally
  introduced by \citet{BertsIoffe}. Each basis function takes a Tetris board
  configuration as its argument. The functions are as follows:
\begin{itemize}
\item Ten basis functions, $\phi_0, \hdots, \phi_9$, mapping the state to the
  height $h_k$ of each of the ten columns.
\item Nine basis functions, $\phi_{10},\hdots, \phi_{18}$, each mapping the
  state to the absolute difference between heights of successive columns:
  $|h_{k+1} - h_k|, k=1,\hdots,9$.
\item One basis function, $\phi_{19}$, that maps state to the maximum column
  height: $\max_k h_k$
\item One basis function, $\phi_{20}$, that maps state to the number of 'holes'
  in the board.
\item One basis function, $\phi_{21}$, that is equal to $1$ in every state.
\end{itemize}

\item {\bf State sampling.} Given a sample size $S$, a collection
$\hat{\Xscr}\subset\Xscr$ of $S$ states was sampled. These sampled were
generated in an i.i.d. fashion from the stationary distribution of a (rather
poor) baseline policy\footnote{Our baseline policy had an average performance
  of $113$ points.}.  For each choice of sample size $S$, ten different
collections of $S$ samples were generated.

\item {\bf Optimization.} Given the collection $\hat{\Xscr}$ of sampled
states, an increasing sequence of choices of the violation budget $\theta \geq
0$ is considered. For each choice of $\theta$, the optimization program
\eqref{eq:rlp} was solved.

\item {\bf Policy evaluation.} Given a vector of weights $\hat{r}$, the
performance of the corresponding policy was evaluated using Monte Carlo
simulation. We calculate the average performance of policy $\mu_{\hat{r}}$
over a series of 3000 games. Performance in measured in terms of the average
number of lines eliminated in a single game. The sequence of pieces in each of
the 3000 games was fixed across the evaluation of different policies in order
to allow better comparisons.
\end{itemize}

For each pair $(S,\theta)$, the resulting {\em average} performance (averaged
over each of the 10 policies arising from the different sets of sampled
states) is shown in Figure~\ref{fig:policy-performance}.  Note that the
$\theta=0$ curve in Figure~\ref{fig:policy-performance} corresponds to the
original ALP algorithm. Figure~\ref{fig:policy-performance} provided
experimental evidence for the intuition expressed in Section~\ref{sec:salp}
and the analytic result of Theorem~~\ref{th:bound1}: Relaxing the constraints
of the ALP even slightly, by allowing for a small slack budget, allows for
better policy performance. As the slack budget $\theta$ is increased from $0$,
performance dramatically improves. At the peak value of $\theta=0.16384$, the
SALP generates policies with performance that is an order of magnitude better
than ALP. Beyond this value, the performance of the SALP begins to degrade, as
shown by the $\theta=0.65536$ curve. Hence, we did not explore larger values
of $\theta$.

\begin{figure}[htb]
  \centering
  \hidefastcompile{
  \begin{tikzpicture}[font=\small]
    \begin{axis}[xlabel=Sample Size $S$,
      ylabel=Average Performance,
      legend style={anchor=west,at={(1.05,0.5)}},
      xtick scale label code/.code={$\times 10^{#1}$},
      ytick scale label code/.code={$\times 10^{#1}$},
      scaled ticks=base 10:-3,
      width=5in,
      height=4in
      ]

    \addplot coordinates {
        (30000, 3047.001567) 	(60000, 3050.2664) 	(90000, 2759.855867) 	 (120000, 3075.0607)   		(150000, 2907.064833)   		 (180000,2943.8121)     		 (210000,3090.470967)  		 (240000,2999.939033)		(270000,2995.792233)   		 (300000,2952.626967)
      };
      \addlegendentry{$\theta=0.65536$}

      \addplot coordinates {
        (30000, 3081.425133) 	(60000, 3655.9772) 	(90000,3635.192667) 	 (120000,3881.401367)   		(150000,4373.632633)   		 (180000,4480.860167)     		 (210000,4288.057767)  		 (240000,4296.292467)		(270000,4210.255333)   		 (300000,4458.439933)
      };
      \addlegendentry{$\theta=0.16384$}

      \addplot coordinates {
    (30000, 1128.684) 	(60000,1715.9494) 	(90000, 2079.390367) 	 (120000,2175.354133)   		(150000,2349.035267 )   		 (180000,2560.2217)     		 (210000,2702.053767)  		(240000,2849.260333)		 (270000,3085.215167)   		 (300000,2839.2351)
      };
      \addlegendentry{$\theta=0.02048$}

     \addplot coordinates {
    (30000,733.5891333 ) 	(60000, 1229.9427) 	(90000, 1724.4019) 	 (120000,1298.7128)   		(150000,1506.974133 )   		 (180000,1505.043167)     		 (210000,1838.6183)  		 (240000,1903.1347)		(270000,2073.311933)   		 (300000,1782.931367)
      };
      \addlegendentry{$\theta=0.01024$}

      \addplot coordinates {
    (30000, 910.9161333) 	(60000, 567.5833333) 	(90000, 650.8408333) 	 (120000,463.6686667)   		(150000, 419.0497333)   		 (180000,323.4779)     		 (210000,423.4760333)  		(240000,454.3208667)		(270000,411.0994333)   		 (300000,429.8631)
      };
      \addlegendentry{$\theta=0.00256$}

      \addplot coordinates {
        (30000, 220.7153667) (60000, 186.8311) (90000,161.3632667) 	 (120000,153.0669333)   		(150000,164.1075667)   		(180000,137.4048)     		 (210000,173.5442667)  		(240000,179.5007)		(270000,188.9760333)   		 (300000,151.5641667)
      };
      \addlegendentry{$\theta=0$ (ALP)}

    \end{axis}
  \end{tikzpicture}
  }
  \caption{Performance of the average SALP policy 
    for different values of the number of
    sampled states $S$ and the violation budget $\theta$. Values for $\theta$
    were chosen in an increasing fashion starting from $0$,
    until the resulting average performance began to degrade.
    \label{fig:policy-performance}}
\end{figure}

Table~\ref{tab:comparison} summarizes the performance of \emph{best} policies
obtained by various ADP algorithms. Note that all of these algorithms employ
the same basis function architecture. The ALP and SALP results are from our
experiments, while the other results are from the literature. The best
performance result of SALP is a factor of 2 better than the competitors.

\begin{table}[htp]
    \begin{center}
    \begin{tabular}{ccc}
      \toprule
      \textbf{Algorithm} & \textbf{Best Performance} & \textbf{CPU Time} \\
      \midrule
      ALP & 897 & hours \\
      TD-Learning \citep{BertsIoffe} & 3,183 & minutes \\
      ALP with bootstrapping \citep{FariasVanRoyTetris} & 4,274 & hours \\
      TD-Learning \citep{BertsekasNDP} & 4,471 & minutes \\
      Policy gradient \citep{Kakade} & 5,500 & days \\
      SALP & 10,775 & hours \\
      \bottomrule
    \end{tabular}
    \caption{Comparison of the performance of the best policy found with
      various ADP methods.\label{tab:comparison}}
\end{center}
\end{table}

Note that significantly better policies are possible with this basis function
architecture than \emph{any} of the ADP algorithms in
Table~\ref{tab:comparison} discover. Using a heuristic global optimization
method, \citet{Szita2006} report finding policies with a remarkable average
performance of $350{,}000$. Their method is very computationally intensive,
however, requiring one month of CPU time. In addition, the approach employs a
number of rather arbitrary Tetris specific `modifications' that are ultimately
seen to be critical to performance --- in the absence of these modifications,
the method is unable to find a policy for Tetris that scores above a few
hundred points. More generally, global optimization methods typically require
significant trial and error and other problem specific experimentation in
order to work well.

\section{Conclusion}\label{sec:conclusion}

The approximate linear programming (ALP) approach to approximate DP is
interesting at the outset for two reasons. First, the ability to leverage
commercial linear programming software to solve large ADP problems, and
second, the ability to prove rigorous approximation guarantees and performance
bounds. This paper asked whether the formulation considered in the ALP
approach was the ideal formulation. In particular, we asked whether certain
strong restrictions imposed on approximations produced by the approach can be
relaxed in a tractable fashion and whether such a relaxation has a beneficial
impact on the quality of the approximation produced. We have answered both of
these questions in the affirmative. In particular, we have presented a novel
linear programming formulation that, while remaining no less tractable than
the ALP, appears to yield substantial performance gains and permits us to
prove extremely strong approximation and performance guarantees.

There are a number of interesting algorithmic directions that warrant
exploration. For instance, notice that from \eqref{eq:stoch_prg}, that the
SALP may be written as an unconstrained stochastic optimization problem. Such
problems suggest natural \emph{online} update rules for the weights $r$, based
on stochastic gradient methods, yielding `data-driven' ADP methods. The
menagerie of online ADP algorithms available at present are effectively
iterative methods for solving a projected version of Bellman's
equation. TD-learning is a good representative of this type of approach and,
as can be seen from Table~\ref{tab:comparison}, is not among the highest
performing algorithms in our computational study. An online update rule that
effectively solves the SALP promises policies that will perform on par with
the SALP solution, while at the same time retaining the benefits of an online
ADP algorithm. A second interesting algorithmic direction worth exploring is
an extension of the smoothed linear programming approach to average cost
dynamic programming problems.

As discussed in Section~\ref{sec:analysis}, theoretical guarantees for ADP
algorithms typically rely on some sort of idealized assumption. For instance,
in the case of the ALP, it is the ability to solve an LP with a potentially
intractable number of states or else access to a set of sampled states,
sampled according to some idealized sampling distribution. For the SALP, it is
the latter of the two assumptions. It would be interesting to see whether this
assumption can be loosened for some specific class of MDPs. An interesting
class of MDPs in this vein are high dimensional optimal stopping problems. Yet
another direction for research, is understanding the dynamics of
`bootstrapping' procedures, that solve a sequence of sampled versions of the
SALP with samples for a given SALP in the sequence drawn according to a
policy produced by the previous SALP is the sequence.

{\small
\singlespacing
\bibliography{ConcatBib}
}

\appendix

\section{Proofs for Section~\ref{sec:simple_bound}}\label{sec:ell-proof}

\begin{le:ell} 
  For any $r \in \R^K$ and $\theta \geq 0$:
  \begin{enumerate}[(i)]
  \item $\ell(r,\theta)$ is a finite-valued, decreasing, piecewise linear,
    convex function of $\theta$.
  \item
    \[
    \ell(r,\theta) \leq \frac{1+\alpha}{1-\alpha} \|  J^* - \Phi r\|_\infty.
    \]
  \item
    The right partial derivative of $\ell(r,\theta)$ with
    respect to $\theta$ satisfies
    \[
    \frac{\partial^+}{\partial \theta^+} \ell(r,0) 
    = - \left((1 - \alpha) \sum_{x\in\Omega(r)} \pi_{\mu^*,\nu}(x)\right)^{-1},
    \]
    where 
    \[
    \Omega(r) \defeq \argmax_{x\in\Xscr}\ \Phi r(x) - T\Phi r (x).
    \]
  \end{enumerate}
\end{le:ell}
\begin{proof}
  (i) Given any $r$, clearly $\gamma \defeq \|\Phi r - T\Phi r\|_\infty$,
  $s\defeq 0$ is a feasible point for \eqref{eq:flp}, so $\ell(r,\theta)$ is
  well-defined. To see that the LP is bounded, suppose $(s,\gamma)$ is
  feasible. Then, for any $x\in\Xscr$ with $\pi_{\mu^*,\nu}(x)>0$,
  \[
  \gamma \geq \Phi r(x) - T\Phi r(x) - s(x) \geq
  \Phi r(x) - T\Phi r(x) - \theta/\pi_{\mu^*,\nu}(x) > \infty.
  \]
  Letting $(\gamma_1, s_1)$ and $(\gamma_2, s_2)$ represent optimal solutions
  for the LP \eqref{eq:flp} with parameters $(r,\theta_1)$ and $(r,\theta_2)$
  respectively, it is easy to see that $((\gamma_1 + \gamma_2)/2,
  (s_1+s_2)/2)$ is feasible for the LP with parameters
  $(r,(\theta_1+\theta_2)/2)$. It follows that $\ell(r,(\theta_1 +
  \theta_2)/2) \leq (\ell(r,\theta_1) + \ell(r,\theta_2))/2$. The remaining
  properties are simple to check.
  
  (ii) Let $\epsilon\defeq \|  J^* - \Phi r\|_\infty$. Then,
  \[
  \|  T \Phi r - \Phi r  \|_\infty
  \leq \|  J^* - T \Phi r \|_\infty + \| J^* - \Phi r \|_\infty
  \leq \alpha \|  J^*  - \Phi r \|_\infty + \epsilon = (1+\alpha)\epsilon.
  \]
  Since $\gamma \defeq \| T \Phi r - \Phi r \|_\infty$, $s \defeq 0$ is
  feasible for \eqref{eq:flp}, the result follows.
  
  (iii) Fix $r\in\R^K$, and define
  \[
  \Delta \defeq \max_{x \in \Xscr}\ \big(\Phi r (x) - T \Phi r (x)\big) 
  - \max_{x \in \Xscr \setminus \Omega(r)}\ \big(\Phi r (x) - T \Phi r (x)\big) > 0.
  \]
  Consider the program for $\ell(r, \delta)$. It is easy to verify that for
  $\delta \geq 0$ and sufficiently small, viz. $\delta \leq \Delta \sum_{x \in
    \Omega(r)} \pi_{\mu^*,\nu}(x)$, $(\bar{s}_{\delta}, \bar{\gamma}_\delta)$
  is an optimal solution to the program, where
  \[
  \bar{s}_\delta(x) \defeq 
  \begin{cases}
  \frac{\delta}{\sum_{x \in \Omega(r)} \pi_{\mu^*,\nu}(x)}
  & \text{if $x \in \Omega(r)$,} \\
  0 & \text{otherwise,} 
  \end{cases}
  \]
  and
  \[
  \bar{\gamma}_\delta \defeq \gamma_0 - \frac{\delta}{\sum_{x \in \Omega(r)}
  \pi_{\mu^*,\nu}(x)},
  \]
  so that 
  \[
  \ell(r,\delta) = \ell(r,0) - \frac{\delta}{(1-\alpha)\sum_{x \in \Omega(r)} \pi_{\mu^*,\nu}(x)}.
  \]
  Thus, 
  \[
  \frac{\ell(r,\delta) - \ell(r,0)}{\delta} = -\left((1-\alpha)\sum_{x \in \Omega(r)} \pi_{\mu^*,\nu}(x)\right)^{-1}.
  \]
  Taking a limit as $\delta \searrow 0$ yields the result. 
\end{proof}

\begin{le:feas}
  Suppose that the vectors $J\in\R^\Xscr$ and $s\in\R^\Xscr$ satisfy
  \[
  J \leq T_{\mu^*} J + s.
  \]
  Then,
  \[
  J  \leq J^* +  \Delta^* s,
  \]
  where
  \[
  \Delta^* \defeq
  \sum_{k=0}^\infty (\alpha P_{\mu^*})^k = (I - \alpha P_{\mu^*})^{-1},
  \]
  and $P_{\mu^*}$ is the transition probability matrix corresponding to an
  optimal policy.

  In particular, if $(r,s)$ is feasible for the LP
  \eqref{eq:salp3}. Then,
  \[
  \Phi r  \leq J^* +  \Delta^* s.
  \]
\end{le:feas}
\begin{proof}
  Note that the $T_{\mu^*}$, the Bellman operator corresponding to the optimal
  policy $\mu^*$, is monotonic and is a contraction. Then, repeatedly applying
  $T_{\mu^*}$ to the inequality $J \leq T_{\mu^*} J + s$ and using the fact
  that $T^k_{\mu^*} J \rightarrow J^*$, we obtain
  \[
  J \leq J^* + \sum_{k=0}^\infty (\alpha
  P_{\mu^*})^k s = J^* + \Delta^* s.
  \]
\end{proof}

\section{Proof of Lemma~\ref{le:sample_complexity}}
\label{sec:sampling-proof}




We begin with the following definition: consider a family $\Fscr$ of functions
from a set $\Sscr$ to $\{0,1\}$. Define the \emph{Vapnik-Chervonenkis (VC)
  dimension} $\dim_{\text{VC}}(\Fscr)$ to be the cardinality $d$ of the
largest set $\{x_1,x_2,\dots,x_d \} \subset \Sscr$ satisfying:
\[
\forall e \in \{0,1\}^d,\ 
\exists f \in \Fscr\text{ such that }
\forall i,\ f(x_i) = 1\text{ iff } e_i=1.
\]

Now, let $\Fscr$ be some set of \emph{real}-valued functions mapping $\Sscr$
to $[0,B]$. The \emph{pseudo-dimension} $\dim_P(\Fscr)$ is the following
generalization of VC dimension: for each function $f \in \Fscr$ and scalar $c
\in \R$, define a function $g\colon \Sscr \times \R \tends \{0,1\}$ according
to:
\[
g(x,c) \defeq \I{f(x) - c \geq 0}.
\]
Let $\Gscr$ denote the set of all such functions. Then, we define
$\dim_P(\Fscr) \defeq \dim_{\text{VC}}(\Gscr)$.

In order to prove Lemma~\ref{le:sample_complexity}, define the $\Fscr$ to be
the set of functions $f\colon \R^K\times\R\rightarrow [0,B]$, where, for all
$x\in\R^K$ and $y\in\R$,
\[
f(y,z) \defeq \zeta\left(r^\top y + z\right).
\]
Here, $\zeta(t) \defeq \max\left(\min(t,B),0\right)$, and $r\in\R^K$ is a
vector that parameterizes $f$. We will show that $\dim_P(\Fscr) \leq K+2$.

We will use the following standard result from convex geometry:
\begin{lemma}[Radon's Lemma] A set $A\subset\R^m$ of $m+2$ points can be
  partitioned into two disjoint sets $A_1$ and $A_2$, such that the convex
  hulls of $A_1$ and $A_2$ intersect.
\end{lemma}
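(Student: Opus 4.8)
The plan is to reduce this geometric statement to the elementary linear-algebra fact that too many vectors in a low-dimensional space must be linearly dependent, and then to read off the partition from the signs of the dependence coefficients. First I would label the points $A = \{x_1,\dots,x_{m+2}\}\subset\R^m$ and lift each to $\R^{m+1}$ by appending a coordinate equal to $1$, forming $\tilde{x}_i \defeq (x_i,1)$. Since there are $m+2$ such vectors living in the $(m+1)$-dimensional space $\R^{m+1}$, they must be linearly dependent; hence there exist scalars $\lambda_1,\dots,\lambda_{m+2}$, not all zero, with $\sum_i \lambda_i \tilde{x}_i = 0$. Reading off the two blocks of this vector identity gives simultaneously $\sum_i \lambda_i x_i = 0$ and $\sum_i \lambda_i = 0$, that is, an affine dependence among the original points.

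Next I would partition the indices by the sign of the coefficients: let $I \defeq \{i : \lambda_i > 0\}$ and $J \defeq \{i : \lambda_i < 0\}$. Because the $\lambda_i$ sum to zero and are not all zero, there must be at least one strictly positive and at least one strictly negative coefficient, so $I$ and $J$ are both nonempty and disjoint. I would then set $S \defeq \sum_{i\in I} \lambda_i = -\sum_{j\in J} \lambda_j > 0$, where the second equality uses $\sum_i \lambda_i = 0$.

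To exhibit the common point, I would rewrite the affine dependence as $\sum_{i\in I} \lambda_i x_i = \sum_{j\in J} (-\lambda_j) x_j$ and divide by $S$, obtaining
\[
p \defeq \sum_{i\in I} \frac{\lambda_i}{S}\, x_i = \sum_{j\in J} \frac{-\lambda_j}{S}\, x_j.
\]
The coefficients $\lambda_i/S$ for $i\in I$ are nonnegative and sum to $1$, so $p$ lies in the convex hull of $A_1 \defeq \{x_i : i\in I\}$; symmetrically, the coefficients $-\lambda_j/S$ for $j\in J$ are nonnegative and sum to $1$, so $p$ also lies in the convex hull of $A_2 \defeq \{x_j : j\in J\}$. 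Any indices with $\lambda_i = 0$ may be assigned arbitrarily to either set. Thus $A_1$ and $A_2$ are disjoint with intersecting convex hulls, which is the claim.

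I expect no genuine obstacle here; the one point deserving care—and the place where the homogenizing coordinate earns its keep—is verifying that \emph{both} parts of the partition are nonempty. This is exactly what the constraint $\sum_i \lambda_i = 0$ guarantees: a bare linear dependence $\sum_i \lambda_i x_i = 0$ could in principle have all nonzero coefficients of one sign, making the sign-based split degenerate, whereas the summation constraint forces coefficients of both signs to appear. The crux of the argument is simply recognizing that lifting to $\R^{m+1}$ converts the geometric claim into the pigeonhole statement that $m+2$ vectors in an $(m+1)$-dimensional space are linearly dependent.
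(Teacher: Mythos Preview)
Your argument is correct and is in fact the standard proof of Radon's Lemma. The paper itself does not prove this statement; it merely cites it as a ``standard result from convex geometry'' and invokes it as a black box in the subsequent pseudo-dimension estimate, so there is no paper proof to compare against.
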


\begin{lemma}
$\dim_P(\Fscr) \leq K+2$
\end{lemma}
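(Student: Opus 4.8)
The plan is to bound the pseudo-dimension of $\Fscr$ by exhibiting, for any candidate shattered set of $K+3$ points, a labeling that cannot be realized. Recall that $\dim_P(\Fscr)=\dim_{\text{VC}}(\Gscr)$ where $\Gscr$ consists of the functions $g(y,z,c)=\I{\zeta(r^\top y+z)-c\geq 0}$ indexed by $(r,c)\in\R^K\times\R$. So suppose for contradiction that some set of $K+3$ points $\{(y^{(j)},z^{(j)},c^{(j)})\}_{j=1}^{K+3}\subset\R^K\times\R\times\R$ is shattered by $\Gscr$. The key observation is that the ``interesting'' part of the decision boundary is governed by the affine function $r\mapsto r^\top y^{(j)}+z^{(j)}$, together with the one extra real parameter $c$ and the one extra clamp level $B$; this should cost at most $K+2$ degrees of freedom, which is where Radon's Lemma enters.

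The main steps I would carry out are as follows. First, reduce the clamped predicate $\zeta(r^\top y+z)\geq c$ to a statement about the affine quantity $u_j(r)\defeq r^\top y^{(j)}+z^{(j)}$: since $\zeta$ is nondecreasing, for $c\leq 0$ the predicate is always true, for $c>B$ it is always false, and for $c\in(0,B]$ it holds iff $u_j(r)\geq c$. Hence without loss of generality each threshold $c^{(j)}$ lies in $(0,B]$ (points with the degenerate thresholds can never help shatter), and the event $g=1$ at point $j$ is exactly $\{u_j(r)\geq c^{(j)}\}$, i.e. $\{r^\top y^{(j)} \geq c^{(j)}-z^{(j)}\}$, a halfspace in $r\in\R^K$. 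Second, lift to $\R^{K+1}$: writing $\tilde r=(r,1)$ and $a_j=(y^{(j)},\,z^{(j)}-c^{(j)})$, the event becomes $\{\tilde r^\top a_j\geq 0\}$, a homogeneous halfspace. So shattering $K+3$ points by $\Gscr$ would force the $K+3$ vectors $a_1,\dots,a_{K+3}\in\R^{K+1}$ to be shattered by homogeneous halfspaces through the origin in $\R^{K+1}$. Third, apply Radon's Lemma in $\R^{K+1}$ to the $K+3 = (K+1)+2$ points $\{a_j\}$: there is a partition $A_1\sqcup A_2$ with a common point $p=\sum_{j\in A_1}\lambda_j a_j=\sum_{j\in A_2}\mu_j a_j$ in both convex hulls. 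Consider the labeling that assigns $1$ to the points in $A_1$ and $0$ to the points in $A_2$ (or the reverse). If some $\tilde r$ realized it, then $\tilde r^\top a_j\geq 0$ for all $j\in A_1$ gives $\tilde r^\top p\geq 0$, while $\tilde r^\top a_j<0$ for all $j\in A_2$ gives $\tilde r^\top p<0$ — a contradiction. Hence no set of $K+3$ points is shattered, so $\dim_{\text{VC}}(\Gscr)\leq K+2$, i.e. $\dim_P(\Fscr)\leq K+2$.

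The one technical wrinkle I expect to need care with is the treatment of the boundary/degenerate cases — the thresholds $c^{(j)}\le 0$ or $c^{(j)}>B$, the weak-versus-strict inequality in the definition of $g$ (so that the Radon argument produces a genuine contradiction rather than a boundary coincidence), and the fact that a shattering witness must realize \emph{all} $2^{K+3}$ labelings, so it suffices to kill one. Handling the weak/strict issue cleanly may require choosing, among the two Radon labelings, the one for which the sign argument is strict on at least one side, or perturbing slightly; in the standard treatment one argues that if $\tilde r^\top p\ge 0$ and $\tilde r^\top p\le 0$ both hold then $\tilde r^\top p=0$, and then at least one $a_j$ lies exactly on the boundary hyperplane, contradicting realization of the label that demanded strict separation. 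This is the only place where the argument is more than bookkeeping.

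Once $\dim_P(\Fscr)\leq K+2$ is established, the Chernoff/uniform-convergence bound in Lemma~\ref{le:sample_complexity} follows from a standard uniform law of large numbers for function classes of bounded range $[0,B]$ and bounded pseudo-dimension (e.g. the Pollard/Haussler-type covering-number bound, plugging $d=K+2$ into the $(C B/\epsilon)^{d}$ metric-entropy estimate and then a union bound with the one-sided Hoeffding tail $\exp(-\epsilon^2 n/(64B^2))$); the final sample-size statement is obtained by setting the right-hand side to $\delta$ and solving for $n$.
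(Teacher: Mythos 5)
Your proposal is correct and follows essentially the same route as the paper: restrict to thresholds $c^{(j)}\in(0,B]$, rewrite the clamped predicate as a halfspace condition $r^\top y^{(j)}\geq c^{(j)}-z^{(j)}$, lift the $K+3$ points to $\R^{K+1}$, and apply Radon's Lemma to contradict shattering (your homogeneous-halfspace formulation with $\tilde r=(r,1)$ is just a sign-convention variant of the paper's lifting). The strict-versus-weak inequality worry you flag is in fact a non-issue, since a convex combination of strictly negative values is strictly negative, which is exactly how the paper closes the argument.
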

\begin{proof}
  Assume, for the sake of contradiction, that $\dim_P(\Fscr) > K+2$. It must
  be that there exists a `shattered' set
  \[
  \left\{
    \big(y^{(1)},z^{(1)},c^{(1)}\big),
    \big(y^{(2)},z^{(2)},c^{(2)}\big),
    \ldots,
    \big(y^{(K+3)},z^{(K+3)},c^{(K+3)}\big)
  \right\}
  \subset \R^K\times\R\times\R,
  \]
  such that, for all $e \in \{0,1\}^{K+3}$, there exists a vector
  $r_e \in \R^K$ with
  \[
  \zeta\left(r_e^\top y^{(i)} + z^{(i)} \right) \geq c^{(i)}
  \text{ iff }
  e_i =1,
  \quad\forall\ 1 \leq i \leq K+3.
  \]

  Observe that we must have $c^{(i)} \in (0,B]$ for all $i$, since if $c^{(i)}
  \leq 0$ or $c^{(i)} > B$, then no such shattered set can be
  demonstrated. But if $c^{(i)} \in (0,B]$, for all $r\in\R^K$,
  \[
  \zeta\left(r^\top y^{(i)} + z^{(i)} \right) \geq c^{(i)}
  \implies
  r_e^\top y^{(i)} \geq c^{(i)} - z^{(i)},
  \]
  and
  \[
  \zeta\left(r^\top y^{(i)} + z^{(i)} \right) < c^{(i)}
  \implies
  r_e^\top y^{(i)} < c^{(i)} - z^{(i)}.
  \]

  For each $1 \leq i \leq K+3$, define $x^{(i)} \in \R^{K+1}$ component-wise
  according to
  \[
  x^{(i)}_j \defeq
  \begin{cases}
    y^{(i)}_j & \text{if $j < K+1$}, \\
    c^{(i)} - z^{(i)} & \text{if $j=K+1$.}
  \end{cases}
  \]
  Let $A = \{x^{(1)},x^{(2)},\ldots,x^{(K+3)}\}\subset\R^{K+1}$, and let $A_1$
  and $A_2$ be subsets of $A$ satisfying the conditions of Radon's
  lemma. Define a vector $\tilde{e} \in \{0,1\}^{K+3}$ component-wise
  according to
  \[
  \tilde{e}_i \defeq \I{x^{(i)}\in A_1}.
  \]
  Define the vector $\tilde{r} \defeq r_{\tilde{e}}$. Then, we have
  \[
  \sum_{j=1}^K \tilde{r}_j x_j \geq x_{K+1},
  \quad\forall\ x\in A_1,
  \]
  \[
  \sum_{j=1}^K \tilde{r}_j x_j < x_{K+1},
  \quad\forall\ x\in A_2.
  \]

  Now, let $\bar{x}\in\R^{K+1}$ be a point contained in both the convex hull of
  $A_1$ and the convex hull of $A_2$. Such a point must exist by Radon's
  lemma. By virtue of being contained in the convex hull of $A_1$, we must
  have
  \[
  \sum_{j=1}^{K} \tilde{r}_j \bar{x}_j \geq \bar{x}_{K+1}.
  \]
  Yet, by virtue of being contained in the convex hull of $A_2$, we must have
  \[
  \sum_{j=1}^{K} \tilde{r}_j \bar{x}_j  < \bar{x}_{K+1},
  \]
  which is impossible.
\end{proof}

With the above pseudo-dimension estimate, 
Lemma~\ref{le:sample_complexity} follows immediately from Corollary~2 of
of \citet[][Section~4]{Haussler92}.

\end{document}